\newtheorem{theorem}{Theorem}
\newtheorem{remark}[theorem]{Remark}
\newtheorem{lemma}[theorem]{Lemma}
\newtheorem{proposition}[theorem]{Proposition}
\newtheorem{corollary}[theorem]{Corollary}
\newtheorem{definition}[theorem]{Definition}
\newtheorem{example}[theorem]{Example}
\newcommand{\wto}{\overset{w}{\longrightarrow}}
\DeclareMathOperator*{\divergenz}{div}              %
\DeclareMathOperator*{\Ss}{S}
\newcommand{\N}{\mathbb{N}}
\newcommand{\R}{\mathbb{R}}
\newcommand{\Lp}[1]{L^{#1}(\Omega)}
\newcommand{\Lprand}[1]{L^{#1}(\partial\Omega)}
\newcommand{\Wp}[1]{W^{1,#1}(\Omega)}
\newcommand{\into}{\int_{\Omega}}
\newcommand{\Linf}{L^{\infty}(\Omega)}
\renewcommand{\l}{\left}
\renewcommand{\r}{\right}
\numberwithin{theorem}{section}
\numberwithin{equation}{section}
\newcommand{\WH}{W^{1, \mathcal{H}}(\Omega)}
\title[Double phase obstacle problems with multivalued convection]{Double phase obstacle problems with multivalued convection and mixed boundary value conditions}
\author[S. Zeng]{Shengda Zeng}
\address[S. Zeng]{Guangxi Colleges and Universities Key Laboratory of Complex System Optimization and Big Data Processing, Yulin Normal University, Yulin 537000, Guangxi, P.R. China, \&Department of Mathematics, Nanjing University, Nanjing, Jiangsu, 210093, P.R. China, \& Jagiellonian University in Krakow,
Faculty of Mathematics and Computer Science, ul. Lojasiewicza 6, 30-348 Krakow, Poland}
\email{zengshengda@163.com}
\author[V.D. R\u{a}dulescu]{Vicen\c{t}iu D.\,R\u{a}dulescu}
\address[V.D. R\u{a}dulescu]{Faculty of Applied Mathematics, AGH University of Science and Technology, 30-059 Krak\'ow, Poland \& Department of Mathematics, University of Craiova, 200585 Craiova, Romania \& China-Romania Research Center in Applied Mathematics, Craiova, Romania}
\email[Corresponding author]{radulescu@inf.ucv.ro}
\author[P.\,Winkert]{Patrick Winkert}
\address[P.\,Winkert]{Technische Universit\"{a}t Berlin, Institut f\"{u}r Mathematik, Stra\ss e des 17.\,Juni 136, 10623 Berlin, Germany}
\email{winkert@math.tu-berlin.de}
\subjclass[2010]{35J20, 35J25, 35J60}
\keywords{Clarke's generalized subgradient, Double phase problem, Kuratowski upper limit, Moreau-Yosida approximation, multivalued convection term, obstacle problem, surjectivity theorem}
\begin{document}

\begin{abstract}
	In this paper, we consider a mixed boundary value problem with a double phase partial differential operator, an obstacle effect and a multivalued reaction convection term. Under very general assumptions, an existence theorem for the mixed boundary value problem under consideration is proved by using a surjectivity theorem for multivalued pseudomonotone operators together with the approximation method of Moreau-Yosida. Then, we introduce a family of the approximating problems without constraints  corresponding to the mixed boundary value problem. Denoting by $\mathcal S$ the solution set of the mixed boundary value problem and by $\mathcal S_n$ the solution sets of the approximating problems, we establish the following convergence relation
	\begin{align*}
		\emptyset\neq w\text{-}\limsup\limits_{n\to\infty}{\mathcal S}_n=s\text{-}\limsup\limits_{n\to\infty}{\mathcal S}_n\subset \mathcal S,
	\end{align*}
	where $w$-$\limsup_{n\to\infty}\mathcal S_n$ and $s$-$\limsup_{n\to\infty}\mathcal S_n$ stand for the weak and the strong Kuratowski upper limit of $\mathcal S_n$, respectively.
\end{abstract}

\maketitle

\section{Historical comments and statement of the problem}

The study of obstacle problem goes back to the pioneering contributions of J.-L.~Lions \cite{lions} who studied the following simple, beautiful and deep problem: find the equilibrium position $u=u(x)$  of an elastic membrane restricted to lie above a given obstacle $\psi=\psi(x)$, where  $x\in\Omega$ and $\Omega\subset \R^2$ is a bounded domain with smooth boundary. This equilibrium position is the unique minimizer of the Dirichlet energy functional, that is,
$$\min_{v\in K}\int_\Omega |\nabla v|^2\,\mathrm{d} x,$$
where $K$ is a suitable convex set of functions greater or equal to $\psi$. This problem turns out to be equivalent to the following variational inequality
$$u\in K\ \text{and}\ \int_\Omega \nabla u\cdot\nabla (v-u)\,\mathrm{d} x\geq 0\quad \text{ for all } v\in K,$$
which can be seen as a system of Euler inequalities for the corresponding minimization problem.
In the region $[v>\psi]$ where the membrane is above the obstacle, the solution $u$ solves an elliptic equation (say, $\Delta u=0$), while in the other region the membrane coincides with the obstacle (namely, $u=\psi$).
The region $[v=\psi]$ is known as the contact set and the interface that separates the two regions is the free boundary.
In such a way, the obstacle problem has been introduced for a membrane and for a plate and it is the simplest unilateral problem from the classical elasticity theory, as well as for the nonparametric minimal and capillary surfaces.
Various classes of obstacle problems arise naturally when describing
phenomena in the real world. Many of these models (fluid filtration through
porous medium, osmosis, optimal stopping, heat control, etc.) are described
in the monographs by Duvaut \& Lions \cite{duvaut} and Rodrigues \cite{rodri}.

Let  $\Omega\subset \R^N$, $N>2$,  be a bounded domain such that its boundary $\Gamma:=\partial \Omega$ is Lipschitz continuous and $\Gamma$ is divided into two disjoint measurable parts $\Gamma_1$  and $\Gamma_2$ with meas$(\Gamma_1)>0$. Let $1<p<q<N$ and let $\nu$ be the outward unit normal at the boundary $\Gamma$. Given a bounded function $\mu\colon \Omega\to [0,+\infty)$, a multivalued operator $f\colon \Omega\times \R\times \R^N\to 2^{\R}$,  an obstacle function $\Phi\colon \Omega \to \R_+$ and a function $j\colon \Gamma_2\times \R\to \R$, we consider the following elliptic obstacle inclusion problem with a double phase differential operator, a multivalued convection term and mixed boundary conditions, where the boundary conditions are composed of a homogeneous Dirichlet boundary condition and a multivalued boundary condition which is described by Clarke's generalized subgradient:
\begin{equation}\label{eqn1}
	\begin{aligned}
		D_\mu(u) +|u|^{p-2}u+\mu(x)|u|^{q-2}u&\in f(x,u,\nabla u)\quad && \text{in } \Omega,\\
		u  &\leq \Phi\quad && \text{in } \Omega,\\
		u  &= 0 &&\text{on } \Gamma_1,\\
		\frac{\partial u}{\partial \nu_\mu}&\in -\partial j(x,u)&&\text{on }\Gamma_2,
	\end{aligned}
\end{equation}
where
\begin{align*}
	D_\mu(u):=-\divergenz\left(|\nabla u|^{p-2}\nabla u+\mu(x) |\nabla u|^{q-2}\nabla u\right)
\end{align*}
and
\begin{align*}
	\frac{\partial u}{\partial \nu_\mu}:=\left(|\nabla u|^{p-2}\nabla u+\mu(x) |\nabla u|^{q-2}\nabla u\right) \cdot \nu.
\end{align*}

It should be mentioned that in our setting the part $\Gamma_2$ can be empty. In this case, problem \eqref{eqn1} reduces to the Dirichlet double phase problem
\begin{equation}\label{eqn1.0}
	\begin{aligned}
		D_\mu(u)+|u|^{p-2}u+\mu(x)|u|^{q-2}u &\in f(x,u,\nabla u)\quad && \text{in } \Omega,\\
		u  &\leq \Phi\quad && \text{in } \Omega,\\
		u  &= 0 &&\text{on } \Gamma,
	\end{aligned}
\end{equation}
which has recently been introduced and studied by Zeng, Gasi\'nski, Winkert \& Bai~\cite{Zeng-Gasinski-Winkert-Bai-2021}.

In general, the novelty of the present work is the fact that several interesting and challenging phenomena are considered in one problem. To be more precise, problem \eqref{eqn1} contains the following effects:
\begin{enumerate}
	\item[(i)]
		a double phase partial differential operator;
	\item[(ii)]
		a multivalued convection term;
	\item[(iii)]
		an obstacle restriction;
	\item[(iv)]
		a multivalued boundary condition, which is formulated by Clarke's generalized subdifferential operator for locally Lipschitz functions.
\end{enumerate}

Problem \eqref{eqn1} appears naturally when considering optimal stopping problems for L\'evy processes with jumps, which arise for example as option pricing models in mathematical finance.
The main contribution of this paper is twofold. The first objective is to explore the nonemptiness, boundedness and closedness of the solution set $\mathcal{S}$ to problem \eqref{eqn1}. Our method is based on a surjectivity theorem for multivalued pseudomonotone operators and the approximation method of Moreau-Yosida. Since the obstacle effect leads to various difficulties in obtaining the exact and numerical solutions, some appropriate and useful approximating methods have been introduced and developed to overcome the obstacle constraints.  Based on this, the second part of this paper is aimed to consider a family of approximating problems corresponding to \eqref{eqn1}  without constraints and to establish a critical convergence theorem which indicates that the solution set of the obstacle problem can be approximated by the solution sets of approximating problems, denoted by $\{\mathcal S_n\}$, in the sense of Kuratowski. More precisely, we are going to show that
\begin{align*}
	\emptyset\neq w\text{-}\limsup\limits_{n\to\infty}{\mathcal S}_n=s\text{-}\limsup\limits_{n\to\infty}{\mathcal S}_n\subset \mathcal S,
\end{align*}
where $w$-$\limsup_{n\to\infty}\mathcal S_n$ is the weak Kuratowski upper limit of $\mathcal S_n$ and $s$-$\limsup_{n\to\infty}\mathcal S_n$ stands for the strong Kuratowski upper limit of $\mathcal S_n$.

In 1986, Zhikov \cite{Zhikov-1986} initially introduced a nonlinear and nonhomogeneous integral functional
\begin{align}\label{integral_minimizer}
   u \mapsto \int \left(|\nabla  u|^p+\mu(x)|\nabla  u|^q\right)\,\mathrm{d}x
\end{align}
to investigate the mechanics problems for strongly anisotropic materials. It is not difficult to see that the corresponding differential form of \eqref{integral_minimizer} is written by
\begin{eqnarray}\label{double_phase_operator}
u\mapsto-\divergenz\left(|\nabla u|^{p-2}\nabla u+\mu(x) |\nabla u|^{q-2}\nabla u\right).
\end{eqnarray}
On the one hand, from the physical point of view, the integral functional \eqref{integral_minimizer}  describes exactly the  phenomenon that the energy density changes its ellipticity and growth properties according to the point in the domain. On the other hand, from the mathematical point of view, the behavior of the integral functional \eqref{integral_minimizer} depends on the values of the weight function $\mu(\cdot)$. More precisely, on the set $\{x\in \Omega: \mu(x)=0\}$ it will be controlled by the gradient of order $p$ and in the case $\{x\in \Omega: \mu(x) \neq 0\}$ it is the gradient of order $q$. This is the essential reason why we call \eqref{double_phase_operator} as double phase partial differential operator.

Double phase differential operators and corresponding energy functionals  interpret various comprehensive natural phenomena, and model several problems in Mechanics, Physics and Engineering Sciences. For example, in the elasticity theory, the modulating coefficient $\mu(\cdot)$ dictates the geometry of composites made of two different materials with distinct power hardening exponents $q$ and $p$, see~\cite{Zhikov-2011}.  Thereafter, this topic attracted increasing attention and witnessed plenty of significant results in various aspects. For instance, Gasi\'nski \& Winkert~\cite{Gasinski-Winkert-2020b} applied the theory of pseudomonotone operators to study the existence and uniqueness of solution for a quasilinear elliptic equation with double phase phenomena and a reaction term depending on the gradient. Under the assumption that the reaction is superlinear but without satisfying the Ambrosetti-Rabinowitz condition, Papageorgiou, R\u adulescu \& Repov\v s~\cite{Papageorgiou-Radulescu-Repovs-2020} considered a double phase Robin problem with a Carath\'eodory nonlinearity and proved an existence theorem as well as a multiplicity theorem using Morse theoretic tools and the notion of homological local linking. By establishing a weighted inequality for a Baouendi-Grushin operator and a related compactness property, Bahrouni, R\u{a}dulescu \& Repov\v{s}~\cite{Bahrouni-Radulescu-Repovs-2019} obtained the existence of stationary waves under arbitrary perturbations of the reaction for a class of double phase transonic flow problems with variable growth.

Apart from their obvious importance in the theory of partial differential equations, obstacle problems have a
natural theoretical interest in stochastic control. Additionally, they can be found in physics, biology, and mathematical finance.
One of the most well-known financial challenges is establishing the arbitrage-free price of American-style options.
Concerning the mathematical analysis of obstacle problems, we refer to
the recent contribution of  Zeng, Bai, Gasi\'nski \& Winkert~\cite{Zeng-Bai-Gasinski-Winkert-2020} who applied a surjectivity theorem for multivalued mappings, Kluge's fixed point principle and tools from nonsmooth analysis to explore the existence of weak solutions for a new kind of implicit obstacle problems driven by a double phase partial differential operator and a multivalued term which is described by Clarke's generalized gradient. 
We also refer to Bertoin \cite{bertoin} for L\'evy processes with jumps, which
 arise in the description of various phenomena
in the applied sciences, such as plasma physics, flame propagation, free
boundary obstacle problems,  or phase transitions in the Gamma convergence framework.
For further results concerning single-valued equations involving double phase operators or multivalued equations with or without double phase operators  we refer to the works of Alves, Garain \& R\u adulescu~\cite{Alves-Garain-Radulescu-2021}, Ambrosio \& R\u adulescu \cite{Ambrosio-Radulescu-2020}, Bahrouni, R\u{a}dulescu \& Winkert \cite{Bahrouni-Radulescu-Winkert-2020,Bahrouni-Radulescu-Winkert-2019}, Baroni, Colombo \& Mingione~\cite{Baroni-Colombo-Mingione-2015,Baroni-Colombo-Mingione-2016,Baroni-Colombo-Mingione-2018}, Cencelj, R\u{a}dulescu \& Repov\v{s} \cite{Cencelj-Radulescu-Repovs-2018}, Colasuonno \& Squassina \cite{Colasuonno-Squassina-2016}, Colombo \& Mingione \cite{Colombo-Mingione-2015a,Colombo-Mingione-2015b}, Farkas \& Winkert~\cite{Farkas-Winkert-2021},  Gasi\'nski \& Papageorgiou \cite{Gasinski-Papageorgiou-2019, Gasinski-Papageorgiou-2017,Gasinski-Papageorgiou-2005}, Gasi\'n\-ski \& Winkert \cite{Gasinski-Winkert-2020a, Gasinski-Winkert-2020b, Gasinski-Winkert-2021}, Liu \& Dai \cite{Liu-Dai-2018}, Marino \& Winkert \cite{Marino-Winkert-2020}, Papageorgiou, R\u{a}dulescu \& Repov\v{s} \cite{Papageorgiou-Radulescu-Repovs-2019, Papageorgiou-Radulescu-Repovs-2018}, Papageorgiou, Vetro \& Vetro \cite{Papageorgiou-Vetro-Vetro-2020a,Papageorgiou-Vetro-Vetro-2021,Papageorgiou-Vetro-Vetro-2020b}, Perera \& Squassina \cite{Perera-Squassina-2019}, R\u{a}dulescu \cite{Radulescu-2019}, Vetro \cite{Vetro-2021}, Vetro \& Vetro \cite{Vetro-Vetro-2020}, Zhang \& R\u{a}dulescu \cite{Zhang-Radulescu-2018},
see also the references therein. 
Basic analytic tools used in this paper can be found in the monographs \cite{galewski, Papageorgiou-Winkert-2018}.
Finally, we mention the  overview article of Mingione \& R\u{a}dulescu \cite{Mingione-Radulescu-2021}  about recent developments for problems with nonstandard growth and nonuniform ellipticity.

The paper is organized as follows. Section \ref{Preliminaries} is devoted to recall some useful and important preliminaries such as the Musielak-Orlicz spaces $\Lp{\mathcal{H}}$ and its corresponding Sobolev spaces $\Wp{\mathcal{H}}$, the definition of  Kuratowski lower and upper limit, as well as the Moreau-Yosida approximation to proper, convex and lower semicontinuous functions, respectively. In Section \ref{Section3}, we apply a surjectivity result for multivalued pseudomonotone operators combined with the Moreau-Yosida approximation to examine the nonemptiness, boundedness and closedness of solution set to problem \eqref{eqn1}, see Theorem \ref{theorems3.3}. Finally, in Section \ref{Section4}, a family of the approximating problems without constraints corresponding to problem \eqref{eqn1} is introduced and an impressive convergence theorem is obtained, which reveals the essential relations between the sets $\mathcal S$, $w$-$\limsup_{n\to\infty}\mathcal S_n$  and $s$-$\limsup_{n\to\infty}\mathcal S_n$, see Theorem~\ref{main_theorem}.

\section{Mathematical background}\label{Preliminaries}

Let $\Omega$ be a bounded domain in $\R^N$ and let $1\leq r<\infty$. For any subset $D$ of $\overline \Omega$, in what follows, we denote by $L^r(D):=L^r(D;\R)$ and $L^r(D;\R^N)$ the usual Lebesgue spaces endowed with the norm $\|\cdot\|_{r,D}$, that is,
\begin{align*}
	\|u\|_{r,D}:=\left(\int_D |u|^r\,\mathrm{d}x\right)^\frac{1}{r}\quad \text{for all }u\in L^r(D).
\end{align*}
We set $L^r(D)_+:=\{u\in L^r(D)\,:\,u(x)\ge 0\text{ for a.\,a.\,}x\in D\}$.
Moreover, $W^{1,r}(\Omega)$ stands for the Sobolev space endowed with the norm $\|\cdot\|_{1,r,\Omega}$, namely,
\begin{align*}
	\|u\|_{1,r,\Omega}:=\|u\|_{r,\Omega}+\|\nabla u\|_{r,\Omega}\quad \text{for all }u\in W^{1,r}(\Omega).
\end{align*}

For any $1<r<\infty$ we denote by $r'$ the conjugate exponent of $r$, that is, $\frac{1}{r}+\frac{1}{r'}=1$. In the sequel, we denote by $r^*$ and $r_*$  the critical exponents to $r$ in the domain and on the boundary, respectively, given by
\begin{align}\label{defp1*}
		r^*=
		\begin{cases}
			\frac{Nr}{N-r} &\text{ if }r<N,\\
			+\infty &\text{ if }r\ge N,
		\end{cases}
		\quad\text{and}\quad
		r_*=
		\begin{cases}
			\frac{(N-1)r}{N-r} &\text{ if }r<N,\\
			+\infty &\text{ if }r\ge N,
		\end{cases}
\end{align}
respectively.

Consider the $r$-Laplacian eigenvalue problem with Steklov boundary condition given by
\begin{equation}\label{Steklovbc}
	\begin{aligned}
	-\Delta_ru&=-|u|^{r-2}u \quad && \text{in } \Omega,\\
	|\nabla u|^{r-2}\nabla u\cdot\nu&=\lambda|u|^{r-2}u  &&\text{on } \Gamma,
	\end{aligned}
\end{equation}
for $1<r<\infty$. From L{\^e} \cite{Le-2006} we can see that problem \eqref{Steklovbc} admits a smallest eigenvalue $\lambda_{1,r}^S>0$ which is isolated and simple. In addition, the  smallest eigenvalue $\lambda_{1,r}^S>0$  satisfies the following variational equality:
\begin{equation}\label{Steklovbc2}
	\lambda_{1,r}^S=\inf_{u\in W^{1,r}(\Omega)\setminus\{0\}}\frac{\|\nabla u\|_{r,\Omega}^r+\|u\|_{r,\Omega}^r}{\|u\|_{r,\Gamma}^r}.
\end{equation}

For problem \eqref{eqn1}, in the whole paper, we assume that the weight function $\mu$ and powers $p$, $q$ satisfy the following conditions:
\begin{align}\label{conditions-p-q-mu}
	1<p<N,\quad p<q<p^*\quad \text{and}\quad 0\leq \mu(\cdot)\in \Linf.
\end{align}

Let us introduce the nonlinear function $\mathcal H\colon \Omega\times\R_+\to \R_+$ given by
\begin{align*}
	\mathcal H(x,t):=t^p+\mu(x)t^q \quad \text{for all }(x,t)\in \Omega\times \R_+,
\end{align*}
where $\R_+=[0,\infty)$. By virtue of the definition of $\mathcal{H}$, we are now in a position to recall the well-known Musielak-Orlicz function space denoted by $L^\mathcal H(\Omega)$, that is,
\begin{align*}
	L^\mathcal{H}(\Omega):=\left \{u ~ \Big | ~ u: \Omega \to \R \text{ is measurable and } \rho_{\mathcal{H}}(u):=\into \mathcal{H}(x,|u|)\,\mathrm{d}x< +\infty \right \}.
\end{align*}
It is obvious that $L^\mathcal H(\Omega)$ endowed with the Luxemburg norm
\begin{align*}
  \|u\|_{\mathcal{H}} = \inf \left \{ \tau >0 \ \Big  | \ \rho_{\mathcal{H}}\left(\frac{u}{\tau}\right) \leq 1  \right \}\quad \text{for all }u\in L^\mathcal H(\Omega)
\end{align*}
is a reflexive Banach space, see Colasuonno \& Squassina \cite[Proposition 2.14]{Colasuonno-Squassina-2016}. Moreover, let us consider the seminormed function space defined by
\begin{align*}
	L^q_\mu(\Omega)=\left \{u ~ \Big | ~ u: \Omega \to \R \text{ is measurable and } \into \mu(x) |u|^q \,\mathrm{d}x< +\infty \right \},
\end{align*}
which is equipped  with the seminorm $\|\cdot\|_{q,\Omega,\mu}$ given by
\begin{align*}
	\|u\|_{q,\Omega,\mu} = \left(\into \mu(x) |u(x)|^q \,\mathrm{d}x \right)^{\frac{1}{q}}\quad\text{for all }u\in  L^q_\mu(\Omega).
\end{align*}

By $W^{1,\mathcal{H}}(\Omega)$ we denote the corresponding Musielak-Orlicz Sobolev space defined by
\begin{align*}
	W^{1,\mathcal{H}}(\Omega)= \left \{u \in L^\mathcal{H}(\Omega) \,\mid\, |\nabla u| \in L^{\mathcal{H}}(\Omega) \right\}
\end{align*}
equipped with the norm
\begin{align*}
	\|u\|_{1,\mathcal{H}}= \|\nabla u \|_{\mathcal{H}}+\|u\|_{\mathcal{H}}\quad \text{for all }u\in W^{1,\mathcal H}(\Omega),
\end{align*}
where $\|\nabla u\|_\mathcal{H}=\|\,|\nabla u|\,\|_{\mathcal{H}}$. Since problem \eqref{eqn1} is a mixed boundary value problem with double phase partial differential operator, we introduce a closed subspace $V$ of $W^{1,\mathcal{H}}(\Omega)$ defined by
\begin{align*}
	V:=\{u\in W^{1,\mathcal{H}}(\Omega)\,\mid\,u=0\text{ on $\Gamma_1$}\},
\end{align*}
which is also a reflexive Banach space.  For the sake of convenience, in what follows, we denote by $\|\cdot\|_V$  the norm of $V$, that is, $\|u\|_V=  \|u\|_{1,\mathcal{H}}$ for all $u\in V$, and by $V^*$ we denote the dual space of $V$.

Next, we collect some useful embedding results for the spaces $L^\mathcal H(\Omega)$ and $W^{1,\mathcal H}(\Omega)$. We refer, for example, to Gasi\'nski \& Winkert \cite[Proposition 2.2]{Gasinski-Winkert-2021} or Crespo-Blanco, Gasi\'nski, Harjulehto \& Winkert \cite[Proposition 2.17]{Crespo-Blanco-Gasinski-Winkert-2021}.

\begin{proposition}\label{proposition_embeddings}
	Let \eqref{conditions-p-q-mu} be satisfied and let $p^*$ as well as $p_*$ be the critical exponents to $p$ as given in \eqref{defp1*} for $r=p$. Then the following embeddings hold:
	\begin{enumerate}
		\item[\textnormal{(i)}]
		$\Lp{\mathcal{H}} \hookrightarrow \Lp{r}$ and $\WH\hookrightarrow \Wp{r}$ are continuous for all $r\in [1,p]$;
		\item[\textnormal{(ii)}]
		$\WH \hookrightarrow \Lp{r}$ is continuous for all $r \in [1,p^*]$ and compact for all $r \in [1,p^*)$;
		\item[\textnormal{(iii)}]
		$\WH \hookrightarrow \Lprand{r}$ is continuous for all $r \in [1,p_*]$ and compact for all $r \in [1,p_*)$;
		\item[\textnormal{(iv)}]
		$\Lp{\mathcal{H}} \hookrightarrow L^q_\mu(\Omega)$ is continuous;
		\item[\textnormal{(v)}]
		$\Lp{q}\hookrightarrow\Lp{\mathcal{H}} $ is continuous.
	\end{enumerate}
\end{proposition}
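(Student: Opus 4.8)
The plan is to establish the embeddings in Proposition~\ref{proposition_embeddings} by combining the structural properties of the Musielak-Orlicz space $\Lp{\mathcal{H}}$ with the classical Sobolev embedding theory for $\Wp{p}$. The organizing principle throughout will be the elementary pointwise inequalities relating $\mathcal{H}(x,t)=t^p+\mu(x)t^q$ to the powers $t^p$ and $t^q$, translated into norm inequalities via the definition of the Luxemburg norm and the modular $\rho_{\mathcal{H}}$.

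First I would prove the bottommost structural comparisons. For (v), since $\mu\in\Linf$ is bounded and $q>p$, one has the pointwise bound $\mathcal{H}(x,t)=t^p+\mu(x)t^q\le t^p+\|\mu\|_\infty t^q$; testing against $u\in\Lp{q}$ and controlling the $t^p$ part on the finite-measure domain $\Omega$ by $t^q$ (after splitting $\{|u|\le 1\}$ and $\{|u|>1\}$, or using $\into|u|^p\,\mathrm{d}x\le C\into(1+|u|^q)\,\mathrm{d}x$) shows $\rho_{\mathcal{H}}(u)<\infty$ with a norm estimate, giving the continuous inclusion $\Lp{q}\hookrightarrow\Lp{\mathcal{H}}$. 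For (iv), the trivial bound $\mu(x)|u|^q\le\mathcal{H}(x,|u|)$ gives $\|u\|_{q,\Omega,\mu}^q\le\rho_{\mathcal{H}}(u)$ directly, hence $\Lp{\mathcal{H}}\hookrightarrow L^q_\mu(\Omega)$. For (i), I would use $t^p\le\mathcal{H}(x,t)$ to get $\Lp{\mathcal{H}}\hookrightarrow\Lp{p}$, and then $\Lp{p}\hookrightarrow\Lp{r}$ for $r\in[1,p]$ on the bounded domain by H\"older's inequality; the Sobolev-space version $\WH\hookrightarrow\Wp{r}$ then follows by applying the $L$-level embedding to both $u$ and $|\nabla u|$ and summing norms.

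The core analytic content is (ii) and (iii), and these are where I would lean on the classical theory rather than reprove it. Having established in (i) that $\WH\hookrightarrow\Wp{p}$ continuously, I would simply compose with the standard Sobolev embedding $\Wp{p}\hookrightarrow\Lp{r}$, which (for the bounded Lipschitz domain $\Omega$ with $1<p<N$) is continuous for $r\in[1,p^*]$ and, by the Rellich-Kondrachov theorem, compact for $r\in[1,p^*)$. This yields the assertion of (ii). For (iii), the same composition against the trace embedding $\Wp{p}\hookrightarrow\Lprand{r}$ gives continuity for $r\in[1,p_*]$ and compactness for $r\in[1,p_*)$, again by the classical compactness of the trace operator below the critical boundary exponent. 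In both cases the compactness transfers cleanly because $\WH\hookrightarrow\Wp{p}$ is continuous, so a bounded sequence in $\WH$ is bounded in $\Wp{p}$ and hence admits a strongly convergent subsequence in the target space.

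The main obstacle I anticipate is not in any single inequality but in making the comparison in (v) fully rigorous: the term $t^p$ is \emph{not} dominated by $\mathcal{H}(x,t)$ uniformly in a way that respects the Luxemburg norm on the region where $\mu$ vanishes, so the continuity of $\Lp{q}\hookrightarrow\Lp{\mathcal{H}}$ must be argued through the modular and the finiteness of $\meas(\Omega)$ rather than by a naive pointwise bound. Concretely, one must verify that the Luxemburg norm $\|u\|_{\mathcal{H}}$ is controlled by $\|u\|_{q,\Omega}$ by estimating $\rho_{\mathcal{H}}(u/\tau)$ for a suitable $\tau\sim\|u\|_{q,\Omega}$ and invoking the unit-ball characterization of the norm; the embedding constant then depends on $\|\mu\|_\infty$ and $\meas(\Omega)$. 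Since this is exactly the place where the two powers $p$ and $q$ interact with the weight, I would handle it carefully, but all remaining steps reduce to routine applications of H\"older's inequality and the cited classical embedding and compactness theorems for $\Wp{p}$.
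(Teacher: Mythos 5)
Your proposed proof is correct; the paper does not prove this proposition itself but cites Gasi\'nski \& Winkert and Crespo-Blanco, Gasi\'nski, Harjulehto \& Winkert, and the arguments there are essentially the ones you outline: modular--norm comparisons via the Luxemburg norm for (i), (iv), (v), and composition of the continuous embedding $\WH\hookrightarrow\Wp{p}$ with the classical Sobolev, Rellich--Kondrachov and trace theorems for (ii) and (iii). The only quibble is a wording slip in your final paragraph --- the difficulty in (v) is that $t^p$ is not dominated by $t^q$ near $t=0$ (it is of course always dominated by $\mathcal H(x,t)$) --- but your actual resolution, estimating $\rho_{\mathcal H}(u/\tau)$ with $\tau\sim\|u\|_{q,\Omega}$ using H\"older's inequality and $\meas(\Omega)<\infty$, is exactly right.
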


\begin{remark}
It is obvious that if we replace the space $W^{1,\mathcal H}(\Omega)$ by $V$, then the embeddings of \textnormal{(ii)} and \textnormal{(iii)} in Proposition \ref{proposition_embeddings} hold as well.
\end{remark}

From Liu \& Dai \cite[Proposition 2.1]{Liu-Dai-2018} we have the following proposition.
\begin{proposition}
	Let \eqref{conditions-p-q-mu} be satisfied and let $y\in \Lp{\mathcal{H}}$. Then the following hold:
	\begin{enumerate}
		\item[{\rm(i)}]
			if $y\neq 0$, then $\|y\|_\mathcal H=\lambda$ if and only if $\rho_\mathcal H\left(\frac{y}{\lambda}\right)=1$;
		\item[{\rm(ii)}]
			$\|y\|_\mathcal H<1$ (resp. $>1$ and $=1$) if and only if $\rho_\mathcal H(y)<1$ (resp. $>1$ and $=1$);
		\item[{\rm(iii)}]
			if $\|y\|_\mathcal H<1$, then $\|y\|_{\mathcal H}^q\le \rho_\mathcal H(y)\le \|y\|_\mathcal H^p$;
		\item[{\rm(iv)}]
			if $\|y\|_\mathcal H>1$, then $\|y\|_{\mathcal H}^p\le \rho_\mathcal H(y)\le \|y\|_\mathcal H^q$;
		\item[{\rm(v)}]
			$\|y\|_\mathcal H\to 0$ if and only if $\rho_\mathcal H(y)\to 0$;
		\item[{\rm(vi)}]
			$\|y\|_\mathcal H\to +\infty$ if and only if $ \rho_\mathcal H(y)\to +\infty$.
	\end{enumerate}
\end{proposition}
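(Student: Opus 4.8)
The plan is to reduce everything to two elementary ingredients: the pointwise scaling behaviour of $\mathcal H(x,t)=t^p+\mu(x)t^q$ and the strict monotonicity together with continuity of the map $\tau\mapsto\rho_{\mathcal H}(y/\tau)$ on $(0,\infty)$. For the scaling, since $1<p<q$ and $\mu\ge 0$, a direct computation gives, for every $(x,t)\in\Omega\times\R_+$,
\begin{align*}
\lambda^p\mathcal H(x,t)\le \mathcal H(x,\lambda t)\le \lambda^q\mathcal H(x,t)\quad\text{if }\lambda\ge 1,
\end{align*}
and
\begin{align*}
\lambda^q\mathcal H(x,t)\le \mathcal H(x,\lambda t)\le \lambda^p\mathcal H(x,t)\quad\text{if }0<\lambda\le 1,
\end{align*}
the switch being governed by the fact that $\lambda^p\le\lambda^q$ exactly when $\lambda\ge 1$. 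Integrating over $\Omega$ yields the matching inequalities for $\rho_{\mathcal H}$, which will be the workhorse for parts (iii)--(vi).

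For (i), I would first record that, for fixed $y\ne 0$, the function $g(\tau):=\rho_{\mathcal H}(y/\tau)$ is continuous on $(0,\infty)$ (by dominated convergence, since $\mathcal H(x,\cdot)$ is continuous and monotone), strictly decreasing (because $t\mapsto\mathcal H(x,t)$ is strictly increasing on the set of positive measure where $y\ne 0$), and satisfies $g(\tau)\to+\infty$ as $\tau\to 0^+$ and $g(\tau)\to 0$ as $\tau\to\infty$ (again via the scaling bounds). The intermediate value theorem then produces a unique $\tau_0$ with $g(\tau_0)=1$, and by the definition of the Luxemburg norm this $\tau_0$ equals $\|y\|_{\mathcal H}$; hence $\|y\|_{\mathcal H}=\lambda$ if and only if $\rho_{\mathcal H}(y/\lambda)=1$, which is (i).

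Given (i), part (ii) is immediate from the strict monotonicity of $g$ and the value $g(1)=\rho_{\mathcal H}(y)$: for instance $\rho_{\mathcal H}(y)<1$ forces the unique root $\tau_0$ to lie below $1$, i.e. $\|y\|_{\mathcal H}<1$, and symmetrically for the $>1$ and $=1$ cases. For (iii), if $\|y\|_{\mathcal H}=:\lambda<1$, then writing $y=\lambda\,(y/\lambda)$ and applying the integrated scaling inequality for $\lambda<1$ to $z=y/\lambda$ gives $\lambda^q\rho_{\mathcal H}(z)\le\rho_{\mathcal H}(y)\le\lambda^p\rho_{\mathcal H}(z)$; since $\rho_{\mathcal H}(z)=\rho_{\mathcal H}(y/\|y\|_{\mathcal H})=1$ by (i), this is precisely $\|y\|_{\mathcal H}^q\le\rho_{\mathcal H}(y)\le\|y\|_{\mathcal H}^p$. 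Part (iv) is the same argument with the $\lambda\ge 1$ branch of the scaling inequality.

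Finally (v) and (vi) follow by combining (ii) with (iii)--(iv): if $\|y\|_{\mathcal H}\to 0$ then eventually $\|y\|_{\mathcal H}<1$, so $\rho_{\mathcal H}(y)\le\|y\|_{\mathcal H}^p\to 0$, while if $\rho_{\mathcal H}(y)\to 0$ then eventually $\rho_{\mathcal H}(y)<1$, whence $\|y\|_{\mathcal H}<1$ by (ii) and $\|y\|_{\mathcal H}^q\le\rho_{\mathcal H}(y)\to 0$; part (vi) is the mirror image using the $>1$ branch. The only genuinely delicate point is (i): I expect the main obstacle to be justifying the continuity and the $\tau\to 0^+$ blow-up of $g$ rigorously (a monotone/dominated convergence argument combined with the scaling bounds) and confirming that the infimum defining the Luxemburg norm is actually attained, so that the norm coincides with the unique root $\tau_0$. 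Everything downstream is then purely algebraic manipulation of the two scaling inequalities.
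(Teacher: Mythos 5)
Your argument is correct and complete: the two pointwise scaling inequalities for $\mathcal H(x,\lambda t)$ together with the continuity, strict monotonicity and limit behaviour of $\tau\mapsto\rho_{\mathcal H}(y/\tau)$ are exactly what is needed, and your reduction of (ii)--(vi) to (i) and the integrated scaling bounds is sound (the "delicate" points you flag in (i) are handled correctly, since the sublevel set $\{\tau>0 \mid \rho_{\mathcal H}(y/\tau)\le 1\}$ is the closed ray $[\tau_0,\infty)$, so the Luxemburg infimum is attained at the unique root). The paper itself gives no proof of this proposition --- it is quoted verbatim from Liu \& Dai \cite[Proposition 2.1]{Liu-Dai-2018} --- so there is nothing internal to compare against; your proof follows the standard route for modulars of this type and is essentially the argument found in that reference.
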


Throughout the paper the symbols "$\wto$" and "$\to$" stand for the weak and the strong convergence, respectively. For a Banach space $(X,\|\cdot\|_X)$ we denote its dual space by $X^*$ and by $\langle\cdot,\cdot\rangle_{X^*\times X}$ the duality pairing between $X^*$ and $X$.

Furthermore, we consider the nonlinear operator $A\colon V\to V^*$ defined by
\begin{equation}\label{defA}
	\langle A(u),v\rangle :=\into \left(|\nabla u|^{p-2}\nabla u+\mu(x)|\nabla u|^{q-2}\nabla u \right)\cdot\nabla v \,\mathrm{d}x+\into \left(|u|^{p-2} u+\mu(x)| u|^{q-2} u \right) v \,\mathrm{d}x
\end{equation}
for $u,v\in V$, where $\langle \cdot,\cdot\rangle$ stands for the duality pairing between $V$ and its dual space $V^*$. The properties of the operator $A\colon V\to V^*$ can be summarized as follows, where its detailed proof could be found in  Liu \& Dai \cite{Liu-Dai-2018} or Crespo-Blanco, Gasi\'nski, Harjulehto \& Winkert \cite[Proposition 3.5]{Crespo-Blanco-Gasinski-Winkert-2021}.

\begin{proposition}\label{prop1}
	The operator $A$ defined by \eqref{defA} is bounded, continuous, monotone (hence maximal monotone) and of type $(\Ss_+)$, that is,
	\begin{align*}
		u_n\wto u \quad \text{in }V \quad\text{and}\quad  \limsup_{n\to\infty}\langle Au_n,u_n-u\rangle\le 0,
	\end{align*}
	imply $u_n\to u$ in $V$.
\end{proposition}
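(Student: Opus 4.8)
The plan is to verify the four asserted properties in turn, treating boundedness, continuity, and monotonicity (which together yield maximal monotonicity) as the routine part, and reserving the bulk of the effort for the $(\Ss_+)$ property. For \emph{boundedness}, I would split the defining integral and apply Hölder's inequality in $\Lp{p}$ to the first term and in $L^q_\mu(\Omega)$ to the second, obtaining
\begin{align*}
	|\langle A(u),v\rangle| \le \|\nabla u\|_{p,\Omega}^{p-1}\,\|\nabla v\|_{p,\Omega} + \|\nabla u\|_{q,\Omega,\mu}^{q-1}\,\|\nabla v\|_{q,\Omega,\mu}.
\end{align*}
Dominating both factors by $\|\cdot\|_V$ via Proposition \ref{proposition_embeddings}(i),(iv) shows the right-hand side is controlled by a bounded function of $\|u\|_V$ times $\|v\|_V$. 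For \emph{continuity}, if $u_n\to u$ in $V$ then $\nabla u_n\to\nabla u$ in $\Lpvalued{p}$ and in the seminorm $\|\cdot\|_{q,\Omega,\mu}$; since the Nemytskii operators $\xi\mapsto|\xi|^{p-2}\xi$ and $\xi\mapsto\mu(x)|\xi|^{q-2}\xi$ are continuous into $\Lpvalued{p'}$ and $\Lpvalued{q'}$ respectively (Krasnoselskii, through a.e.\ convergence along subsequences and domination), it follows that $A(u_n)\to A(u)$ in $V^*$. \emph{Monotonicity} is pointwise: the elementary inequality $(|\xi|^{r-2}\xi-|\eta|^{r-2}\eta)\cdot(\xi-\eta)\ge 0$ applied with $r=p$ and, weighted by $\mu(x)\ge 0$, with $r=q$, integrates to $\langle A(u)-A(v),u-v\rangle\ge0$. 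As a monotone, everywhere-defined, demicontinuous operator from a reflexive Banach space to its dual is automatically maximal monotone, this settles the first three claims.

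The substantial work is the $(\Ss_+)$ property. Assume $u_n\weak u$ in $V$ and $\limsup_{n\to\infty}\langle A(u_n),u_n-u\rangle\le 0$. Since $A(u)\in V^*$ is fixed and $u_n-u\weak 0$, we have $\langle A(u),u_n-u\rangle\to 0$, so that
\begin{align*}
	0\le \limsup_{n\to\infty}\langle A(u_n)-A(u),u_n-u\rangle = \limsup_{n\to\infty}\langle A(u_n),u_n-u\rangle\le 0,
\end{align*}
which forces $\langle A(u_n)-A(u),u_n-u\rangle\to 0$. Because the two phase-integrands are individually nonnegative by monotonicity, each of the integrals must vanish in the limit:
\begin{align*}
	\into\big(|\nabla u_n|^{p-2}\nabla u_n-|\nabla u|^{p-2}\nabla u\big)\cdot(\nabla u_n-\nabla u)\,\mathrm{d}x\to 0,
\end{align*}
and the analogous $\mu$-weighted integral with exponent $q$ tends to $0$ as well.

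The final step, and the main obstacle, is converting these two vanishing integrals into strong gradient convergence. Here I would invoke the Simon inequalities. For $r\ge 2$ the clean bound $(|\xi|^{r-2}\xi-|\eta|^{r-2}\eta)\cdot(\xi-\eta)\ge c_r|\xi-\eta|^r$ gives $L^r$-convergence of the gradients at once. The delicate regime is $1<r<2$, where only the degenerate estimate $(|\xi|^{r-2}\xi-|\eta|^{r-2}\eta)\cdot(\xi-\eta)\ge c_r|\xi-\eta|^2/(|\xi|+|\eta|)^{2-r}$ is available; to recover $L^r$-convergence I would write $|\nabla u_n-\nabla u|^r$ as the product of the degenerate quantity raised to the power $r/2$ and $(|\nabla u_n|+|\nabla u|)^{r(2-r)/2}$, apply Hölder with the conjugate exponents $2/r$ and $2/(2-r)$, and use the uniform bound on $\{\nabla u_n\}$ in $\Lpvalued{p}$ (resp.\ $L^q_\mu(\Omega)$) furnished by $u_n\weak u$. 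Applying this with $r=p$ and, against the weight $\mu$, with $r=q$ yields $\rho_{\mathcal H}(\nabla u_n-\nabla u)=\into\big(|\nabla u_n-\nabla u|^p+\mu(x)|\nabla u_n-\nabla u|^q\big)\,\mathrm{d}x\to 0$, so by the modular--norm relations recalled above $\|\nabla u_n-\nabla u\|_{\mathcal H}\to 0$.

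It remains to treat the lower-order term. Since $p<p^*$ always and $q<p^*$ by \eqref{conditions-p-q-mu}, the compact embeddings of Proposition \ref{proposition_embeddings}(ii) give $u_n\to u$ in $\Lp{p}$ and in $\Lp{q}$, whence
\begin{align*}
	\rho_{\mathcal H}(u_n-u)\le \|u_n-u\|_{p,\Omega}^p+\|\mu\|_{\infty,\Omega}\,\|u_n-u\|_{q,\Omega}^q\to 0,
\end{align*}
so $\|u_n-u\|_{\mathcal H}\to 0$. Adding the gradient and function parts gives $\|u_n-u\|_V\to 0$, i.e.\ $u_n\to u$ in $V$, which completes the proof. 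The only genuinely technical point throughout is the $1<r<2$ Hölder interpolation in the preceding paragraph; everything else reduces to the pointwise monotonicity inequalities and the modular estimates already at our disposal.
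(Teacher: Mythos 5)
Your argument is correct, and it is essentially the standard proof: the paper itself does not prove Proposition \ref{prop1} but defers to Liu \& Dai and to Crespo-Blanco, Gasi\'nski, Harjulehto \& Winkert, where the same route is taken --- H\"older estimates for boundedness, Nemytskii-type continuity, the pointwise monotonicity inequality, and, for the $(\Ss_+)$ property, the splitting of $\langle A(u_n)-A(u),u_n-u\rangle$ into two nonnegative vanishing integrals handled by Simon's inequalities (with the H\"older interpolation in the regime $1<r<2$), followed by the modular--norm relations and the compact embeddings for the lower-order term. No gaps; your treatment of the singular case $1<r<2$ and of the $\mu$-weighted phase is exactly what is needed.
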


Given a real Banach space $(E,\|\cdot\|_E)$, we say that a function $j\colon E\to \R$ is locally Lipschitz at $x\in E$, if there is a neighborhood $O(x)$ of $x$ and a constant $L_x>0$ such that
\begin{align*}
	|j(y)-j(z)|\leq L_x\|y-z\|_E \quad \text{for all } y, z\in O(x).
\end{align*}
We denote by
\begin{align*}
	j^\circ(x;y): = \limsup \limits_{z\to x,\, \lambda\downarrow 0 } \frac{j(z+\lambda y)-j(z)}{\lambda},
\end{align*}
the generalized directional derivative of $j$ at the point $x$ in the direction $y$
and $\partial j\colon E\to 2^{E^*}$ defined by
\begin{align*}
	\partial j(x): =\left \{\, \xi\in E^{*} \, \mid \, j^\circ (x; y)\geq \langle\xi, y\rangle_{E^*\times E} \ \text{ for all } y \in    E  \right \} \quad \text{for all } x\in E
\end{align*}
stands for the generalized gradient of $j$ at $x$ in the sense of Clarke.

We next collect some properties for the generalized gradient and generalized directional derivative of a locally Lipschitz function, see for example, Mig\'{o}rski, Ochal \& Sofonea \cite[Proposition 3.23]{Migorski-Ochal-Sofonea-2013}.

\begin{proposition}
	Let $j\colon E \to \R$ be locally Lipschitz with Lipschitz constant $L_{x}>0$ at $x\in E$. Then we have the following:
	\begin{enumerate}
		\item[{\rm(i)}]
			The function $y\mapsto j^\circ(x;y)$ is positively    homogeneous, subadditive, and satisfies
			\begin{align*}
				|j^\circ(x;y)|\leq L_{x}\|y\|_E \quad \text{for all }y\in E.
			\end{align*}
		\item[{\rm(ii)}]
			The function $(x,y)\mapsto j^\circ(x;y)$ is upper semicontinuous.
		\item[{\rm(iii)}]
			For each $x\in E$, $\partial j(x)$ is a nonempty, convex, and weakly$^*$ compact subset of $E^*$ with $ \|\xi\|_{E^{*}}\leq L_{x}$ for all $\xi\in\partial j(x)$.
		\item[{\rm(iv)}]
			$j^\circ(x;y) = \max\left\{\langle\xi,y\rangle_{E^*\times E} \mid \xi\in\partial j(x)\right\}$ for all $y\in E$.
		\item[{\rm(v)}]
			The multivalued function $E\ni x\mapsto \partial j(x)\subset E^*$ is upper semicontinuous from $E$ into w$^*$-$E^*$.
    \end{enumerate}
\end{proposition}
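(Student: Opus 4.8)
The plan is to derive all five assertions directly from the definition of the generalized directional derivative $j^\circ(x;y)$ together with the local Lipschitz bound, using the Hahn--Banach theorem and the Banach--Alaoglu theorem for the statements about $\partial j(x)$. I would treat (i) and (ii) first, since they are purely analytic facts about the functional $j^\circ$, and then deduce (iii)--(v) from them. Throughout, the key structural point is that $y\mapsto j^\circ(x;y)$ turns out to be a sublinear functional, which is exactly what makes Hahn--Banach applicable.

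For (i), positive homogeneity in $y$ follows from the substitution $\sur=\lambda\alpha$ in the difference quotient defining $j^\circ(x;\alpha y)$ for $\alpha>0$ (the case $\alpha=0$ being trivial); subadditivity follows by writing $j(z+\lambda(y_1+y_2))-j(z)$ as the telescoping sum $[j(z+\lambda y_1+\lambda y_2)-j(z+\lambda y_1)]+[j(z+\lambda y_1)-j(z)]$ and using that the $\limsup$ of a sum is at most the sum of the $\limsup$'s, noting that $z+\lambda y_1\to x$ as $z\to x$ and $\lambda\downarrow 0$. The bound $|j^\circ(x;y)|\le L_x\|y\|_E$ is immediate from $|j(z+\lambda y)-j(z)|\le L_x\lambda\|y\|_E$, which holds once $z$ and $z+\lambda y$ both lie in the Lipschitz neighborhood $O(x)$, i.e. for $z$ near $x$ and $\lambda$ small.

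For (iii) and (iv) I would invoke Hahn--Banach. By (i) the map $y\mapsto j^\circ(x;y)$ is sublinear, so there exists a linear functional dominated by it, giving $\partial j(x)\neq\emptyset$; convexity is clear from the defining inequalities, and the bound in (i) shows $\partial j(x)$ lies in the closed ball of radius $L_x$ in $E^*$, which is weak$^*$ compact by Banach--Alaoglu, while $\partial j(x)$ is itself weak$^*$ closed (being cut out by the inequalities $\langle\xi,y\rangle\le j^\circ(x;y)$), hence weak$^*$ compact. For the max-formula (iv), the inequality $\langle\xi,y\rangle\le j^\circ(x;y)$ holds for every $\xi\in\partial j(x)$ by definition, so I only need attainment: fixing $y_0$, I would define the linear functional $t\mapsto t\,j^\circ(x;y_0)$ on $\mathrm{span}\{y_0\}$, check that it is dominated by $j^\circ(x;\cdot)$ using homogeneity and subadditivity, and extend it by Hahn--Banach to some $\xi\in\partial j(x)$ satisfying $\langle\xi,y_0\rangle=j^\circ(x;y_0)$.

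Finally, (ii) and (v) are the upper semicontinuity statements, and the hard part will be (ii): given $x_n\to x$ and $y_n\to y$ I must show $\limsup_n j^\circ(x_n;y_n)\le j^\circ(x;y)$, which requires selecting, for each $n$, a point $z_n$ near $x_n$ and a scalar $\lambda_n\downarrow 0$ that nearly realizes $j^\circ(x_n;y_n)$, then using the Lipschitz estimate to replace $y_n$ by $y$ at cost $L_x\|y_n-y\|_E$ and recognizing the resulting difference quotients as admissible competitors in the $\limsup$ defining $j^\circ(x;y)$; the delicacy is controlling the diagonal limit in the two independent parameters $z_n$ and $\lambda_n$ simultaneously. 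Once (ii) is established, (v) follows from the sequential characterization: if $x_n\to x$, $\xi_n\in\partial j(x_n)$ and $\xi_n\wtos\xi$, then passing to the limit in $\langle\xi_n,y\rangle\le j^\circ(x_n;y)$ and invoking (ii) yields $\langle\xi,y\rangle\le j^\circ(x;y)$ for all $y\in E$, so $\xi\in\partial j(x)$; combined with the local boundedness of the Lipschitz constants (which keeps the relevant subgradient sets in a fixed ball), this delivers upper semicontinuity from $E$ into $w^*$-$E^*$.
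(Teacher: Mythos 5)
This proposition is not proved in the paper at all: it is quoted verbatim from Mig\'orski, Ochal \& Sofonea \cite[Proposition 3.23]{Migorski-Ochal-Sofonea-2013}, so there is no in-paper argument to compare against. Your outline is the standard proof of these classical facts about Clarke's gradient and it is correct: the telescoping decomposition for subadditivity, the Hahn--Banach extension for nonemptiness and for attainment in the max-formula (where the domination of $t\mapsto t\,j^\circ(x;y_0)$ for $t<0$ uses $0=j^\circ(x;0)\le j^\circ(x;y_0)+j^\circ(x;-y_0)$), and the Banach--Alaoglu argument for weak$^*$ compactness are exactly what the cited reference does. The only point worth tightening is (v): sequential closedness of the graph plus a uniform bound $\|\xi_n\|_{E^*}\le L$ near $x$ yields upper semicontinuity into $w^*$-$E^*$ via weak$^*$ compactness of the ball, but in a nonseparable $E$ the ball is not weak$^*$ metrizable, so one should either phrase the closedness argument with nets or run the standard contradiction argument with an open set $V\supset\partial j(x)$ directly; the substance of your argument is unaffected.
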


Next we recall the following definition, see, for example, Papageorgiou \& Winkert \cite[Definition 6.7.4]{Papageorgiou-Winkert-2018}.

\begin{definition}
	Let $(X,\tau)$ be a Hausdorff topological space and let $\{A_n\}_{n\in\N}\subset 2^X$ be a sequence of sets. We define the $\tau$-Kuratowski lower limit of the sets $A_n$ by
	\begin{align*}
		\tau\text{-}\liminf\limits_{n\to\infty}A_n:=\left\{\, x\in X \mid x=\tau\text{-}\lim\limits_{n\to\infty}x_n,\,x_n\in A_n \ \text{\rm for all } \ n\ge 1\, \right\},
	\end{align*}
	and the $\tau$-Kuratowski upper limit of the sets $A_n$
	\begin{align*}
		\tau\text{-}\limsup\limits_{n\to\infty}A_n:=\left\{\, x\in X \mid 	x=\tau\text{-}\lim\limits_{k\to\infty}x_{n_k}, \,
		x_{n_k}\in A_{n_k}, \, n_1<n_2<\ldots<n_k<\ldots \, \right\}.
	\end{align*}
	If
	\begin{align*}
		A=\tau\text{-}\liminf\limits_{n\to\infty}A_n=\tau\text{-}\limsup\limits_{n\to\infty}A_n,
	\end{align*}
	then $A$ is called $\tau$-Kuratowski limit of the sets $\{A_n\}_{n\in\N}$.
\end{definition}

We end this section by recalling the definition of  the Moreau-Yosida approximation for proper, convex and lower semicontinuous functions including its properties, see Papageorgiou, Kyritsi \& Yiallourou \cite[Definition 3.2.48 and Proposition 3.2.50]{Papageorgiou-Kyritsi-Yiallourou-2009}.

\begin{lemma}\label{lemma2}
	Let $X$ be a Banach space and $\varphi\colon X\to\overline{\R}:=\R\cup\{+\infty\}$ be a proper, convex and lower semicontinuous function. Then, for $\varepsilon>0$, the Moreau-Yosida approximation $\varphi_\varepsilon\colon X\to \R$ of $\varphi$ defined by
	\begin{equation*}
		\varphi_\varepsilon(u)=\inf \bigg\{\frac{\|u-v\|^2_X}{2\varepsilon}+\varphi(v)\,\mid\, v\in X\bigg\} \quad \text{ for all }\ u\in X
	\end{equation*}
	satisfies the following properties:
	\begin{enumerate}
		\item[{\rm (i)}]
			$\varphi_\varepsilon$ is convex, lower semicontinuous and G\^ateaux differentiable.
		\item[{\rm (ii)}]
			The differential operator $\varphi_\varepsilon'\colon X\to X^*$ is bounded, monotone and demicontinuous.
		\item[{\rm (iii)}]
			If $u_\varepsilon \wto u$ in $X$, then we have
			\begin{align*}
				\limsup_{\varepsilon\to 0}\varphi_\varepsilon(v)&\le \varphi(v) \quad \text{ for all }\ v\in X,\\
				\varphi(u)&\le \liminf_{\varepsilon\to 0}\varphi_\varepsilon(u_\varepsilon).
			\end{align*}
	\end{enumerate}
\end{lemma}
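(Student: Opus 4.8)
The plan is to treat $\varphi_\varepsilon$ as the infimal convolution of $\varphi$ with the smooth, strictly convex kernel $\frac{1}{2\varepsilon}\|\cdot\|_X^2$ and to read off its properties from this structure. First I would verify that $\varphi_\varepsilon$ is finite-valued: choosing the competitor $v=v_0$ for any point $v_0$ in the nonempty effective domain of $\varphi$ gives the upper bound $\varphi_\varepsilon(u)\le \frac{\|u-v_0\|_X^2}{2\varepsilon}+\varphi(v_0)<+\infty$, while the affine minorant $\varphi(v)\ge\langle\xi,v\rangle+\beta$ guaranteed by properness, convexity and lower semicontinuity makes the infimand coercive in $v$, so that $\varphi_\varepsilon(u)>-\infty$. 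Convexity of $\varphi_\varepsilon$ follows because $(u,v)\mapsto\frac{\|u-v\|_X^2}{2\varepsilon}+\varphi(v)$ is jointly convex and partial minimization preserves convexity. Since $\varphi_\varepsilon$ is convex, finite and bounded above on a neighborhood of each point, the standard continuity criterion for convex functions shows it is locally Lipschitz, hence in particular lower semicontinuous; this settles everything in (i) except differentiability.

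For Gâteaux differentiability I would use that, $\varphi_\varepsilon$ being convex and continuous, its subdifferential $\partial\varphi_\varepsilon(u)$ is nonempty, convex and weak$^*$-compact, and that differentiability is equivalent to $\partial\varphi_\varepsilon(u)$ being a singleton. The single-valuedness is where the strictly convex smooth kernel is decisive: the infimand $v\mapsto\frac{\|u-v\|_X^2}{2\varepsilon}+\varphi(v)$ is strictly convex and coercive, so in the Hilbert setting, or more generally when $X$ is reflexive with a strictly convex and smooth norm, it has a unique minimizer $v_\varepsilon(u)$ — the proximal/resolvent point — and one identifies $\varphi_\varepsilon'(u)$ with the Yosida-type approximation built from $u-v_\varepsilon(u)$ through the duality map. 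Monotonicity in (ii) is then immediate, since $\varphi_\varepsilon'=\partial\varphi_\varepsilon$ is the subdifferential of a convex function; boundedness follows from the local boundedness of subdifferentials of continuous convex functions, equivalently from the explicit estimate on $u-v_\varepsilon(u)$; and demicontinuity follows because an everywhere-defined, single-valued maximal monotone operator is demicontinuous. I expect the differentiability, and especially the passage to a genuinely non-Hilbertian Banach space, to be the main obstacle, since it hinges on the strict convexity and smoothness of the norm-squared regularizer and on the single-valuedness and continuity of the associated proximal map.

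For (iii) the $\limsup$ inequality is essentially free: testing the infimum with the competitor $v$ itself gives $\varphi_\varepsilon(v)\le\varphi(v)$ for every $\varepsilon>0$, whence $\limsup_{\varepsilon\to0}\varphi_\varepsilon(v)\le\varphi(v)$. For the $\liminf$ inequality I would, along a subsequence realizing a finite value of the $\liminf$, select (near-)minimizers $w_\varepsilon$ with $\varphi_\varepsilon(u_\varepsilon)=\frac{\|u_\varepsilon-w_\varepsilon\|_X^2}{2\varepsilon}+\varphi(w_\varepsilon)$ and show that $\|u_\varepsilon-w_\varepsilon\|_X\to0$. Indeed, boundedness of $\varphi_\varepsilon(u_\varepsilon)$ together with the affine minorant yields $\frac{\|u_\varepsilon-w_\varepsilon\|_X^2}{2\varepsilon}\le C+\|\xi\|_{X^*}\|u_\varepsilon-w_\varepsilon\|_X$, using that $u_\varepsilon\wto u$ is bounded; solving this quadratic inequality forces $\|u_\varepsilon-w_\varepsilon\|_X=O(\sqrt{\varepsilon})\to0$. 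Consequently $w_\varepsilon\wto u$, and since a proper convex lower semicontinuous function is weakly lower semicontinuous, dropping the nonnegative quadratic term gives $\varphi(u)\le\liminf_{\varepsilon\to0}\varphi(w_\varepsilon)\le\liminf_{\varepsilon\to0}\varphi_\varepsilon(u_\varepsilon)$, which is the desired conclusion.
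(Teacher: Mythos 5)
The paper offers no proof of this lemma to compare against: it is quoted as a known result from Papageorgiou, Kyritsi \& Yiallourou \cite[Definition 3.2.48 and Proposition 3.2.50]{Papageorgiou-Kyritsi-Yiallourou-2009}. Judged on its own terms, most of your proposal is sound and complete. Finiteness of $\varphi_\varepsilon$ via a competitor from $\dom\varphi$ plus the affine minorant, convexity via partial minimization of a jointly convex function, local Lipschitz continuity (hence lower semicontinuity) from local boundedness above, the one-line bound $\varphi_\varepsilon(v)\le\varphi(v)$ for the $\limsup$ inequality, and the $O(\sqrt{\varepsilon})$ estimate on $\|u_\varepsilon-w_\varepsilon\|_X$ followed by weak lower semicontinuity of $\varphi$ for the $\liminf$ inequality are all correct; your hedge to near-minimizers in (iii) is the right move, since exact minimizers of the infimand require weak compactness, i.e.\ reflexivity.

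The genuine gap is the G\^ateaux differentiability in (i), and with it the single-valuedness, hence monotone-operator structure, of $\varphi_\varepsilon'$ in (ii) --- precisely the point you flag but do not close. As literally stated for an arbitrary Banach space the claim is false: take $X=\R^2$ with the $\ell^1$ norm and $\varphi$ the indicator of $\{0\}$, so that $\varphi_\varepsilon(u)=\|u\|_1^2/(2\varepsilon)$, which fails to be G\^ateaux differentiable at $(1,0)$ (the two one-sided directional derivatives in the direction $(0,1)$ are $\pm 1/\varepsilon$). The correct setting, and the one in the cited source and in the paper's application to the reflexive space $V$, is a reflexive $X$ renormed via Troyanski's theorem so that $X$ and $X^*$ are locally uniformly convex; then the infimand has a unique minimizer $v_\varepsilon(u)$, the duality map $F$ is single-valued and demicontinuous, and one must actually prove that $\partial\varphi_\varepsilon(u)$ reduces to the singleton $\frac{1}{\varepsilon}F(u-v_\varepsilon(u))$ and that $u\mapsto v_\varepsilon(u)$ behaves well enough to run your maximal-monotonicity argument for demicontinuity. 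You name all the right ingredients, but without these hypotheses the statement cannot be proved, and even with them the identification of the subdifferential remains a sketch in your write-up, so (i)--(ii) are not established as written.
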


\section{Existence result}\label{Section3}

The main objective of this section is to investigate the nonemptiness, boundedness and closedness of the solution set to problem \eqref{eqn1}. To this end, we impose the following hypotheses on the data of problem \eqref{eqn1}:

\begin{enumerate}
	\item[H($f$):]
	The multivalued convection mapping $f\colon \Omega \times \R\times \R^N\to 2^\R$ has nonempty, compact and convex values such that
	\begin{enumerate}
		\item[(i)]
			the multivalued mapping $x\mapsto f(x,s,\xi)$ has a measurable selection for all $(s,\xi)\in\R\times \R^N$;
		\item[(ii)]
			the multivalued mapping $(s,\xi)\mapsto f(x,s,\xi)$ is upper semicontinuous for a.\,a.\,$x\in\Omega$;
		\item[(iii)]
			there exist $c_f\in L^{q_1'}(\Omega)_+$ and $a_f,b_f\ge 0$ such that
			\begin{align*}
				|\eta|\le a_f|\xi|^{\frac{p}{q_1'}}+b_f|s|^{q_1-1}+c_f(x)
			\end{align*}
			for all $\eta\in f(x,s,\xi)$, for all $s\in \R$, for all $\xi\in\R^N$ and for a.\,a.\,$x\in \Omega$, where $1<q_1<p^*$ with the critical exponent $p^*$ in the domain $\Omega$ given in \eqref{defp1*} for $r=p$;
		\item[(iv)]
			there exist $d_f\in L^1_+(\Omega)$, $e_f,g_f\ge 0$  and $\theta_2,\theta_3\in[1,p]$ such that
			\begin{align*}
		\max\left\{e_f\delta(\theta_2))+c_j\left(\lambda_{1,p}^{S}\right)^{-1}, g_f\delta(\theta_3)+c_j\left(\lambda_{1,p}^{S}\right)^{-1}\delta(\theta_1)\right\}<1,
			\end{align*}
			and
			\begin{align*}
				|\eta s|\le e_f|\xi|^{\theta_2}+g_f|s|^{\theta_3}+d_f(x)
			\end{align*}
			for all $\eta\in f(x,s,\xi)$, for all $s\in \R$, for all $\xi\in\R^N$ and for a.\,a.\,$x\in \Omega$, where $\theta_1>0$ is given in H($j$)(iv) (see below),  $\lambda_{1,p}^S$ is the first eigenvalue of the $p$-Laplacian with Steklov boundary condition, see \eqref{Steklovbc} and \eqref{Steklovbc2}, and $\delta\colon[1,p]\to  [0,1]$ is defined by
			\begin{align*}
				\delta(\theta):=
				\begin{cases}
					0,&\text{if }\theta\in[1,p)\\
					1,&\text{if }\theta=p
				\end{cases}
				\quad \text{for all }\theta\in[1,p].
			\end{align*}
	\end{enumerate}
\end{enumerate}

\begin{enumerate}
	\item[H($\Phi$):]
		The function $\Phi\colon \Omega\to [0,\infty)$ is such that $\Phi\in L^{q_1'}(\Omega)$.
\end{enumerate}

\begin{enumerate}
	\item[H($j$):]
	The function $j\colon \Gamma_2\times \R\to \R$ satisfies the following conditions:
	\begin{enumerate}
		\item[(i)]
		$x\mapsto j(x,r)$ is measurable on $\Gamma_2$ for all $r\in\R$ such that  the function $x\mapsto j(x,0)$ belongs to $L^1(\Gamma_2)$;
		\item[(ii)]
		for  a.\,a.\,$x\in \Gamma_2$, $r\mapsto j(x,r)$ is locally Lipschitz continuous;
		\item[(iii)]
		there exist $a_j>0$ and $b_j\in L^{q_2'}(\Gamma_2)_+$ such that
		\begin{align*}
			|\xi|\le a_j|s|^{q_2-1}+b_j(x)
		\end{align*}
		for all $\xi\in \partial j(x,s)$, for all $s\in \R$ and for a.\,a.\,$x\in\Gamma_2$, where $1<q_2< p_*$ and $p_*$ is the critical exponent on the boundary given in \eqref{defp1*} for $r=p$;
		\item[(iv)]
		there exist $c_j>0$, $d_j\in L^1(\Gamma_2)_+$ and $\theta_1\in[1,p]$ such that
		\begin{align*}
			|\xi s|\le c_j|s|^{\theta_1}+d_j(x)
		\end{align*}
		for all $\xi\in \partial j(x,s)$, for all $s\in \R$ and for a.\,a.\,$x\in\Gamma_2$.
	\end{enumerate}
\end{enumerate}

Next, we give two concrete examples for functions $f$ and $j$ that satisfy hypotheses H($f$) and H($j$), respectively.
\begin{example}
	Let us consider the functions $f\colon \R\times \R^N\to 2^\R$ and $j\colon \R\to \R$ defined by
	\begin{align*}
		f(s,\xi)=|s|^\frac{p-1}{2}[-1,1]+|\xi|^\frac{p-1}{2},
	\end{align*}
	for all $s\in\R$, for all $\xi \in \R^N$ and
	\begin{align*}
		j(s)=
		\begin{cases}
			|s|&\text{if }s\in[-1,1],\\
			2-s&\text{if }s\in(1,2],\\
			s+2&\text{if }s\in[-2,-1),\\
			(|s|-2)^p&\text{if }|s|>2,
		\end{cases}
	\end{align*}
	for all $s\in\R$. It is not difficult to see that the functions $f$ and $j$ defined above satisfy  hypotheses \textnormal{H($f$)} and \textnormal{H($j$)}, respectively.
\end{example}

Let $K$ be a subset of $V$ defined by
\begin{equation}\label{defK}
	K:=\left\{u\in V\ \big | \ u(x)\le \Phi(x)\text{ for a.\,a.\,}x\in \Omega\right\}.
\end{equation}

\begin{remark}\label{remarks3.1}
	From hypothesis {\rm H($\Phi$)} it follows that $0\in K $ and $K$ is a nonempty, closed and convex subset of $V$.
\end{remark}

We are now in a position to give the following definition of weak solutions to  problems \eqref{eqn1}.

\begin{definition}
	A function $u\in K$ is called a weak solution of problem \eqref{eqn1} if there exists a function $\eta\in L^{q_1'}(\Omega)$ such that $\eta(x)\in f(x,u(x),\nabla u(x))$ for a.\,a.\,$x\in \Omega$ and
	\begin{align*}
		&\int_\Omega \left(|\nabla u|^{p-2} \nabla u \cdot \nabla (v-u)+\mu(x)|\nabla u|^{q-2} \nabla u\cdot \nabla (v- u)\right)\,\mathrm{d}x\\
&+\int_\Omega\left(|u|^{p-2}u+\mu(x)|u|^{q-2}u\right)(v-u)\,\mathrm{d}x+\int_{\Gamma_2}j^\circ(x,u;v-u)\,\mathrm{d}\Gamma\\
		&\ge \int_\Omega \eta(x) (v-u)\,\mathrm{d}x
	\end{align*}
	for all $v\in K$, where $K$ is defined in \eqref{defK}.
\end{definition}

\begin{remark}
	Note that the definition above is indeed equivalent to the usual one, see, for example, Giannessi \& Khan \cite[Proposition 3.3]{Giannessi-Khan-2000}.
\end{remark}

The main result in this section concerning the nonemptiness, boundedness and closedness of the solution set to problem \eqref{eqn1} is given by the following theorem.

\begin{theorem}\label{theorems3.3}
	Let hypotheses \eqref{conditions-p-q-mu}, \textnormal{H($f$)}, \textnormal{H($\Phi$)} and \textnormal{H($j$)} be satisfied. Then, the solution set $\mathcal{S}$ of problem \eqref{eqn1} is nonempty, bounded and weakly closed in $V$.
\end{theorem}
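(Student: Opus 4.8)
The plan is to establish nonemptiness by a penalization–surjectivity scheme and then deduce boundedness and weak closedness from uniform a priori estimates combined with the $(\Ss_+)$-property of $A$. Throughout, write $\gamma\colon V\to L^{q_2}(\Gamma_2)$ for the trace operator (which is compact by the boundary part of Proposition \ref{proposition_embeddings}), set $J(w):=\int_{\Gamma_2}j(x,w)\,\mathrm d\Gamma$, and introduce the superposition multifunction
\[
	\mathcal N_f(u):=\left\{\eta\in L^{q_1'}(\Omega)\ \big|\ \eta(x)\in f(x,u(x),\nabla u(x))\ \text{for a.\,a. }x\in\Omega\right\},
\]
which, thanks to the growth bound H($f$)(iii) and the compact embedding $V\hookrightarrow L^{q_1}(\Omega)$ of Proposition \ref{proposition_embeddings}(ii), maps $V$ into bounded subsets of $V^*$. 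To relax the obstacle constraint I would take the indicator function $I_K$ of the closed convex set $K$ (nonempty since $0\in K$ by Remark \ref{remarks3.1}), which is proper, convex and lower semicontinuous, and work with its Moreau--Yosida approximation $(I_K)_\varepsilon$ from Lemma \ref{lemma2}.

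For each $\varepsilon>0$ I would then define the penalized multivalued operator $\mathcal G_\varepsilon\colon V\to 2^{V^*}$ by
\[
	\mathcal G_\varepsilon(u):=A(u)+(I_K)_\varepsilon'(u)+\gamma^*\partial J(\gamma u)-\mathcal N_f(u),
\]
and verify that $\mathcal G_\varepsilon$ is bounded, coercive and multivalued pseudomonotone, so that the surjectivity theorem for pseudomonotone operators yields $u_\varepsilon\in V$ and $\eta_\varepsilon\in\mathcal N_f(u_\varepsilon)$ with $0\in\mathcal G_\varepsilon(u_\varepsilon)$. Boundedness of $\mathcal G_\varepsilon$ follows from H($f$)(iii), H($j$)(iii), Lemma \ref{lemma2}(ii) and the bound $\|\xi\|_{E^*}\le L_x$ for the Clarke subgradient. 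Coercivity is exactly where H($f$)(iv) and H($j$)(iv) enter: since $0\in K$ forces $(I_K)_\varepsilon'(0)=0$, monotonicity gives $\langle(I_K)_\varepsilon'(u),u\rangle\ge0$, while $\langle A(u),u\rangle\ge\|\nabla u\|_{p,\Omega}^p$; estimating the convection and boundary contributions through the $\theta_i$-growth bounds and the Korn-type inequalities $\|u\|_{p,\Omega}\le\lambda_1\|\nabla u\|_{p,\Omega}$, $\|u\|_{p,\partial\Omega}\le\lambda_2\|\nabla u\|_{p,\Omega}$ produces a lower estimate whose leading coefficient on $\|\nabla u\|_{p,\Omega}^p$ is positive precisely by the smallness condition $e_f\delta(\theta_2)+g_f\lambda_1\delta(\theta_3)+c_j\lambda_2\delta(\theta_1)<1$, with the terms of order $\theta_i<p$ absorbed as lower order. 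These estimates are uniform in $\varepsilon$, so the family $\{u_\varepsilon\}$ is bounded in $V$.

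The main obstacle is the pseudomonotonicity of $\mathcal G_\varepsilon$, because the convection term depends on $\nabla u$ and is therefore not compact. The mechanism I would use is as follows. Suppose $u_n\wto u$ in $V$ and $w_n=A(u_n)+(I_K)_\varepsilon'(u_n)+\gamma^*\zeta_n-\eta_n\in\mathcal G_\varepsilon(u_n)$ with $\limsup_n\langle w_n,u_n-u\rangle\le0$. The compact embeddings of Proposition \ref{proposition_embeddings}(ii),(iii) give $u_n\to u$ in $L^{q_1}(\Omega)$ and $\gamma u_n\to\gamma u$ in $L^{q_2}(\Gamma_2)$; since $\eta_n$ and $\zeta_n$ are bounded in $L^{q_1'}(\Omega)$ and $L^{q_2'}(\Gamma_2)$ respectively, the pairings $\langle\eta_n,u_n-u\rangle$ and $\langle\gamma^*\zeta_n,u_n-u\rangle$ tend to $0$, and $\langle(I_K)_\varepsilon'(u_n),u_n-u\rangle$ is asymptotically nonnegative by monotonicity. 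Hence $\limsup_n\langle A(u_n),u_n-u\rangle\le0$, and the $(\Ss_+)$-property from Proposition \ref{prop1} upgrades this to $u_n\to u$ strongly in $V$, whence $\nabla u_n\to\nabla u$. Strong convergence then allows the limit passage in the gradient-dependent terms: the upper semicontinuity of $(s,\xi)\mapsto f(x,s,\xi)$ (H($f$)(ii)) together with a measurable-selection/closed-graph argument identifies a limit $\eta\in\mathcal N_f(u)$, while the upper semicontinuity of $x\mapsto\partial j(x,\cdot)$ into $w^*$-$E^*$ and the max-formula for $j^\circ$ handle the boundary multifunction, delivering the required pseudomonotonicity inequality.

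Finally I would pass $\varepsilon\to0$. The uniform bound yields $u_\varepsilon\wto u$ along a subsequence; the inequality $I_K(u)\le\liminf_{\varepsilon\to0}(I_K)_\varepsilon(u_\varepsilon)$ of Lemma \ref{lemma2}(iii), combined with boundedness of the penalty term read off from the approximate equation, forces $I_K(u)<\infty$, i.e. $u\in K$. Testing the approximate problem against $u_\varepsilon-u$ and using $\limsup_{\varepsilon\to0}(I_K)_\varepsilon(v)\le I_K(v)=0$ for $v\in K$, the same $(\Ss_+)$-argument gives $u_\varepsilon\to u$ in $V$, and passing to the limit in the hemivariational inequality recovers the weak formulation, so $\mathcal S\neq\emptyset$. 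Boundedness of $\mathcal S$ follows by testing the variational inequality for an arbitrary $u\in\mathcal S$ with $v=0\in K$, which gives $\langle A(u),u\rangle\le\int_{\Gamma_2}j^\circ(x,u;-u)\,\mathrm d\Gamma+\int_\Omega\eta u\,\mathrm dx$ and, after re-running the coercivity estimate, a bound on $\|u\|_V$ independent of $u$. Weak closedness follows by taking $u_n\in\mathcal S$ with $u_n\wto u$: the set $K$ is weakly closed, so $u\in K$; testing each solution's inequality with $v=u$ and invoking the compactness-plus-$(\Ss_+)$ argument yields $u_n\to u$ strongly, after which the continuity of $A$, the upper semicontinuity of $j^\circ$, the weak convergence $\eta_n\wto\eta$ in $L^{q_1'}(\Omega)$ and the closedness $\eta\in\mathcal N_f(u)$ let me pass to the limit in the defining inequality, giving $u\in\mathcal S$.
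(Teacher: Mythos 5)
Your proposal follows essentially the same route as the paper's proof: Moreau--Yosida regularization of the indicator function of $K$, surjectivity of the bounded, coercive, pseudomonotone sum $A+(I_K)_\varepsilon'+\gamma^*\partial J(\gamma\,\cdot)-\mathcal N_f$, the $(\Ss_+)$-property of $A$ to upgrade weak to strong convergence both in the pseudomonotonicity check and in the passage $\varepsilon\to 0$, and the same coercivity/contradiction estimates for boundedness and weak closedness. The argument is correct; the only (harmless) deviation is your observation that $(I_K)_\varepsilon'(0)=0$, which slightly streamlines the coercivity estimate compared with the paper's bound $\langle I_{K,\varepsilon}'(0),u\rangle\le\|I_{K,\varepsilon}'(0)\|_{V^*}\|u\|_V$.
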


\begin{proof}
	{\bf I: Existence.} Let $\gamma\colon V\to X:=L^p(\Gamma_2)$ be the trace operator from $V$ into $X$. It is obvious from Proposition \ref{proposition_embeddings}(iii) that $\gamma$ is linear, continuous and compact. By $i\colon V\to L^{q_1}(\Omega)$ and $i^*\colon L^{q_1'}(\Omega)\to V^*$, we denote the embedding operator from $V$ to $L^{q_1}(\Omega)$ and its adjoint operator, respectively. Since $1<q_1<p^*$ we know that $i$ and $i^*$ are both compact operators by Proposition \ref{proposition_embeddings}\textnormal{(ii)}. Furthermore, we introduce the functional $J\colon X\to \R$ defined by
	\begin{align*}
		J(w):=\int_{\Gamma_2}j(x,w)\,\mathrm{d}\Gamma\quad \text{for all }w\in X.
	\end{align*}
	From hypotheses H($j$)(i)--(ii) and Theorem 3.47 of Mig\'{o}rski, Ochal \& Sofonea \cite{Migorski-Ochal-Sofonea-2013}, it is easy to see that $J$ is locally Lipschitz continuous and satisfies
	\begin{align}\label{eqns3.03}
		\begin{split}
			j^\circ(u;v)\le \int_{\Gamma_2}j^\circ(x,u(x);v(x))\,\mathrm{d}\Gamma
			\quad\text{and}\quad
			\partial J(u)\subset\int_{\Gamma_2}\partial j(x,u(x))\,\mathrm{d}\Gamma
		\end{split}
	\end{align}
	for all $u,v\in X$.

	For any $w\in X$ and any $\xi\in \partial J(w)$, it holds $\xi(x)\in \partial j(x,u(x))$ for a.\,a.\,$x\in \Gamma_2$. This along with hypothesis H($j$)(iii) deduces that
	\begin{align}\label{eqns3.3}
		\begin{split}
			\|\xi\|_{q_2',\Gamma_2}^{q_2'}
			&\le\int_{\Gamma_2}|\xi(x)|^{q_2'}\,\mathrm{d}\Gamma
			\le \int_{\Gamma_2}\l(a_j|w(x)|^{q_2-1}+b_j(x)\r)^{q_2'}\,\mathrm{d}\Gamma\\
			&\le M_0\int_{\Gamma_2}\l(|w(x)|^{q_2}+|b_j(x)|^{q_2'}\r)\,\mathrm{d}\Gamma= M_0\l(\|w\|_{q_2,\Gamma_2}^{q_2}+\|b_j\|_{q_2',\Gamma_2}^{q_2'}\r),
		\end{split}	
	\end{align}
	for some $M_0>0$. Hypotheses H($f$)(i), (iii) and the proof of Proposition 3 in Papageorgiou, Vetro \& Vetro \cite{Papageorgiou-Vetro-Vetro-2018} allow us to consider the Nemytskij operator $\mathcal N_f\colon V\subset L^{q_1}(\Omega)\to 2^{L^{q_1'}(\Omega)}$ corresponding to the multivalued mapping $f$ defined
	\begin{align*}
		\mathcal N_f(u):=\left \{\eta\in L^{q_1'}(\Omega)\ \big | \ \eta(x)\in f(x,u(x),\nabla u(x))\text{ for a.\,a.\,}x\in \Omega\right\}
	\end{align*}
	for all $u\in V$. Also, the growth condition in hypothesis H($f$)(iv) guarantees that
	\begin{align}\label{eqns3.4}
		\begin{split}
			\|\eta\|_{q_1',\Omega}^{q_1'}
			& =\int_\Omega|\eta(x)|^{q_1'}\,\mathrm{d}x\\
			& \le \int_\Omega \left(a_f|\nabla u|^{\frac{p}{q_1'}}+b_f|u|^{q_1-1}+c_f(x)\right)^{q_1'}\,\mathrm{d}x\\[1ex]
			& \le M_1\int_\Omega \l(|\nabla u|^p+|u|^{q_1}+c_f(x)^{q_1'}\r)\,\mathrm{d}x\\
			& =M_1\left(\|\nabla u\|_{p,\Omega}^p+\|u\|_{q_1,\Omega}^{q_1}+\|c_f\|_{q_1',\Omega}^{q_1'}\right),
		\end{split}
	\end{align}
	for some $M_1>0$. Keeping in mind that the embeddings $V \hookrightarrow  L^{q_1}(\Omega)$ and $V\hookrightarrow L^{q_2}(\Gamma_2)$ are both continuous (even compact), we see that $u\mapsto i^*\circ \mathcal N_f(u)+\gamma^*\partial J(\gamma u)$ maps bounded sets of $V$ into bounded sets of $V^*$.

	Finally, by $I_K\colon V\to \overline \R:=\R\cup\{+\infty\}$ we denote the indicator function of $K$, that is,
	\begin{align*}
		I_K(u):=
		\begin{cases}
			0&\text{if }u\in K,\\
			+\infty&\text{otherwise}.
		\end{cases}
	\end{align*}

	Let us now consider the following problem: find $u\in K$ such that
	\begin{align}\label{eqns3.6}
		Au-i^*\mathcal N_f(u)+\gamma^*\partial J(\gamma u)+\partial_cI_K(u)\ni 0\text{ in $V^*$},
	\end{align}
	where $A$ is given in (\ref{defA}) and $\partial_cI_K$ stands for the convex subdifferential operator of the convex function $I_K$ owning to the closedness and convexity of $K$, see Remark \ref{remarks3.1}.  By the definitions of the convex subgradient and generalized Clarke's subgradient along with inequality \eqref{eqns3.03}, we see that if $u\in K$ solves problem \eqref{eqns3.6}, then it is also a weak solution of problem \eqref{eqn1}. Based on this fact, we are going to verify the existence of a solution of problem \eqref{eqns3.6}.

	In order to obtain the existence of a solution to  problem \eqref{eqns3.6}, for each fixed $\varepsilon>0$, let us consider the following approximating problem associated to problem \eqref{eqns3.6}: find $u_\varepsilon\in V$ such that
	\begin{align}\label{eqns3.7}
		Au_\varepsilon-i^*\mathcal N_f(u_\varepsilon)+\gamma^*\partial J(\gamma u_\varepsilon)+I_{K,\varepsilon}'(u_\varepsilon)\ni 0\text{ in $V^*$},
	\end{align}
	where $I_{K,\varepsilon}\colon V\to \R$ is the Moreau-Yosida approximation of $I_K$ defined by
	\begin{equation*}
		I_{K,\varepsilon}(u):=\inf_{v\in V}\bigg(\frac{\|u-v\|_{V}^2}{2\varepsilon}+I_K(v)\bigg)
	\end{equation*}
	for all $u\in V$ and $I_{K,\varepsilon}'$ is the differential operator of $I_{K,\varepsilon}$, see Lemma \ref{lemma2}. First, we show that the multivalued mapping $u\mapsto Au-i^*\mathcal N_f(u)+\gamma^*\partial J(\gamma u)$ is pseudomonotone. Because of the boundedness, convexity and closedness of $u\mapsto i^*\circ \mathcal N_f(u)+\gamma^*\partial J(\gamma u)$, we conclude from Proposition~\ref{prop1} that  $u\mapsto Au-i^*\mathcal N_f(u)+\gamma^*\partial J(\gamma u)$ has nonempty, bounded, closed and convex values. Using \eqref{eqns3.3} and \eqref{eqns3.4} together with Proposition \ref{prop1}, it implies that $u\mapsto \mathcal Au:=Au-i^*\mathcal N_f(u)+\gamma^*\partial J(\gamma u)$ is a bounded mapping.

	Let $\{u_n\}_{n\in\N}\subset V$, $\{u_n^*\}_{n\in\N}\subset V^*$ and $u\in V$ be such that
	\begin{align}
		&u_n\wto u\quad \text{in }V,\quad u_n^*\wto u^* \quad\text{in }V^*,\label{eqns3.8}\\
		&u_n^*\in \mathcal A(u_n)\quad \text{for all }n\in\N,\nonumber\\
		&\limsup_{n\to\infty}\langle u_n^*,u_n-u\rangle \leq 0.\label{eqns3.10}
	\end{align}
	So, for each $n\in\mathbb N$, we are able to find  elements $\xi_n\in \mathcal N_f(u_n)$ and $\eta_n\in\partial J(\gamma u_n)$ such that $u_n^*=A(u_n)-i^*\xi_n+\gamma^*\eta_n$. Because of the compact embeddings $V\hookrightarrow L^{q_1}(\Omega)$ and $V\hookrightarrow L^{q_2}(\Gamma_2)$, we get from \eqref{eqns3.8} that $u_n\to u$ in $L^{q_1}(\Omega)$ and $u_n\to u$ in $L^{q_2}(\Gamma_2)$. Taking \eqref{eqns3.3} and \eqref{eqns3.4} into account, we can find bounded sequences $\{\xi_n\}_{n\in\N}\subset L^{q_1'}(\Omega)$ and $\{\eta_n\}_{n\in\N}\subset L^{q_2'}(\Gamma_2)$.  Therefore, without any loss of generality, we may assume that
	\begin{align}\label{covg1}
		\xi_n\wto \xi\quad \text{ in }L^{q_1'}(\Omega)
		\quad \text{and}\quad
		\eta_n\wto \eta\quad \text{in }L^{q_2'}(\Gamma_2)
	\end{align}
	for some $\xi\in L^{q_1'}(\Omega)$ and $\eta\in L^{q_2'}(\Gamma_2)$. Then, by \eqref{eqns3.10} and \eqref{covg1} we get
	\begin{align*}
		& \limsup_{n\to\infty}\langle A(u_n),u_n-u\rangle\\
		&= \limsup_{n\to\infty}\langle A(u_n),u_n-u\rangle -\lim_{n\to\infty}\langle \xi_n,u_n-u\rangle_{L^{q_1}(\Omega)}+\lim_{n\to\infty}\langle \eta_n,u_n-u\rangle_{L^{q_2}(\Gamma_2)}\\
		& =\limsup_{n\to\infty}\langle A(u_n)-i^*\xi_n+\gamma^*\eta_n,u_n-u\rangle\\
		& =\limsup_{n\to\infty}\langle u_n^*,u_n-u\rangle \leqslant 0.
	\end{align*}
	This fact together with \eqref{eqns3.8} and the $(\Ss_+)$-property of $A$ (see Proposition \ref{prop1}) implies that $u_n\to u$ in $V$. Recall that $\partial J$ is strongly-weakly closed, so we have $\eta\in \partial J(\gamma u)$, whereas from the continuity of $A$ (see Proposition~\ref{prop1}), we have
	\begin{align*}
		\langle u_n^*,u_n\rangle \to \langle u^*,u\rangle \quad \text{and}\quad A(u_n)\to A(u)
		\quad \text{in $V^*$}.
	\end{align*}
	
Since $\xi_n\in \mathcal N_f(u_n)$ it follows
	\begin{align*}
		\xi_n(x)\in f(x,u_n(x),\nabla u_n(x))\quad\textrm{for a.\,a.}\ x\in \Omega.
	\end{align*}
	From $u_n\to u$ in $V$ and the continuous embedding $V\hookrightarrow W^{1,p}(\Omega)$,  passing to a subsequence if necessary, we can assume that
	\begin{align*}
		u_n(x)\to u(x)\quad \text{and}\quad \nabla u_n(x)\to \nabla u(x) \quad  \text{for a.\,a.\,} x\in \Omega.
	\end{align*}
	Keeping in mind that $\R\times\R^N\ni(s,w)\mapsto f(x,s,w)\subset \R$ is upper semicontinuous and has nonempty closed convex values (see hypotheses H($f$)), we are now in a position to invoke Theorem 7.2.2 of Aubin \& Frankowska \cite[p. 273]{Aubin-Frankowska-1990} in order to conclude that
	\begin{align*}
		\xi(x)\in f(x,u(x),\nabla u(x))\quad \text{for a.\,a.\,} x\in \Omega.
	\end{align*}
	This means that $\xi\in \mathcal N_f(u)$. So we finally have that
	\begin{align*}
		u^*=A(u)-i^*\xi+\gamma^*\eta\in \mathcal A(u),
	\end{align*}
	which implies that $\mathcal A$ is generalized pseudomonotone. Note that $A$ is a bounded operator, it follows from Mig\'{o}rski, Ochal \& Sofonea \cite[Proposition 3.58(ii)]{Migorski-Ochal-Sofonea-2013} along with \eqref{eqns3.3} and \eqref{eqns3.4} that $\mathcal A$ is pseudomonotone.

	Employing Lemma \ref{lemma2} we infer that $I_{K,\varepsilon}'\colon V\to V^*$ is a bounded, demicontinuous and monotone operator, so $I_{K,\varepsilon}'$ is pseudomonotone as well. This allows us to apply Theorem 3.69 of Mig\'{o}rski, Ochal \& Sofonea \cite{Migorski-Ochal-Sofonea-2013} to get that $\mathcal A+I_{K,\varepsilon}'$ is also pseudomonotone.

	Furthermore, we are going to show that $u\mapsto \mathcal A(u)+I_{K,\varepsilon}'(u)$ is coercive. Let $u\in V$, $\xi\in \mathcal N_f(u)$ and $\eta\in \partial J(\gamma u)$ be arbitrary. The monotonicity of $I_{K,\varepsilon}'$ leads to
	\begin{align}\label{eqns3.14}
		\begin{split}
			&\langle A(u)-i^*\xi+\gamma^*\eta+I_{K,\varepsilon}'(u),u\rangle\\
			&= \int_\Omega|\nabla u|^{p-2} \nabla u \cdot \nabla u\,\mathrm{d}x+ \int_\Omega\mu(x)|\nabla u|^{q-2} \nabla u \cdot \nabla u\,\mathrm{d}x+\int_\Omega|u|^p+\mu(x)|u|^q\,\mathrm{d}x \\ &\quad -\int_\Omega\xi(x)u\,\mathrm{d}x+\int_{\Gamma_2}\eta(x)u\,\mathrm{d}\Gamma+\langle I_{K,\varepsilon}'(0),u\rangle
			+\langle I_{K,\varepsilon}'(u)- I_{K,\varepsilon}'(0),u\rangle\\
			&\geq \|\nabla u\|_{p,\Omega}^p+\|\nabla u\|_{q,\Omega,\mu}^q+\|u\|_{p,\Omega}^p+\|u\|_{q,\Omega,\mu}^q\\
			&\quad -\int_\Omega\xi(x)u\,\mathrm{d}x
			+\int_{\Gamma_2}\eta(x)u\,\mathrm{d}\Gamma+\langle I_{K,\varepsilon}'(0),u\rangle.
		\end{split}
	\end{align}
	Let $\varepsilon_1,\varepsilon_2,\varepsilon_3>0$ be arbitrary. Hypothesis H($j$)(iv) implies that
	\begin{align}\label{eqns3.15}
		\begin{split}
			\int_{\Gamma_2}|\eta(x)u|\,\mathrm{d}\Gamma
			&\le \int_{\Gamma_2}c_j|u|^{\theta_1}+d_j(x)\,\mathrm{d}\Gamma\\
			&\le
			\begin{cases}
				c_j\|u\|_{p,\Gamma_2}^p+\|d_j\|_{1,\Gamma_2}&\text{if }\theta_1=p,\\
				\varepsilon_1\|u\|_{p,\Gamma_2}^p+c_1(\varepsilon_1)+\|d_j\|_{1,\Gamma_2}&\text{if } \theta_1<p,
			\end{cases}\\
	&\le
			\begin{cases}
				c_j\left(\lambda_{1,p}^S\right)^{-1}\left(\|\nabla u\|_{p,\Omega}^p+\|u\|_{p,\Omega}^p\right)+\|d_j\|_{1,\Gamma_2}&\text{if }\theta_1=p,\\
				\varepsilon_1\|u\|_{p,\Gamma_2}^p+c_1(\varepsilon_1)+\|d_j\|_{1,\Gamma_2}&\text{if } \theta_1<p,
			\end{cases}
		\end{split}
	\end{align}
	for some $c_1(\varepsilon_1)>0$, where we have used Young's generalized inequality for the case $\theta_1<p$. Additionally, the growth condition H($f$)(iv) and the generalized Young inequality indicate that
	\begin{align}\label{eqns3.16}
		\begin{split}
			&\int_\Omega|\xi(x)u|\,\mathrm{d}x\\
			& \le\int_\Omega e_f|\nabla u|^{\theta_2}+g_f|u|^{\theta_3}+d_f(x)\,\mathrm{d}x\\
			&\le
			\begin{cases}
				e_f\|\nabla u\|_{p,\Omega}^p+g_f\|u\|_{p,\Omega}^p+\|d_f\|_{1,\Omega}&\text{if }\theta_2=\theta_3=p,\\
				e_f\|\nabla u\|_{p,\Omega}^p+\varepsilon_2\|u\|_{p,\Omega}^p+c_2(\varepsilon_2)+\|d_f\|_{1,\Omega}&\text{if }\theta_2=p \text{ and }\theta_3<p,\\
				\varepsilon_3\|\nabla u\|_{p,\Omega}^p+c_3(\varepsilon_3)+g_f\|u\|_{p,\Omega}^p+\|d_f\|_{1,\Omega}&\text{if }\theta_2<p \text{ and }\theta_3=p,\\
				\varepsilon_3\|\nabla u\|_{p,\Omega}^p+c_3(\varepsilon_3)+\varepsilon_2\|u\|_{p,\Omega}^p+c_2(\varepsilon_2)+\|d_f\|_{1,\Omega}&\text{if }\theta_2<p \text{ and }\theta_3<p,
			\end{cases}
		\end{split}
	\end{align}
	for some $c_2(\varepsilon_2),c_3(\varepsilon_3)>0$.  Let us choose $\varepsilon_1,\varepsilon_2,\varepsilon_3$ small enough. From \eqref{eqns3.14}, \eqref{eqns3.15}, \eqref{eqns3.16}, the continuity of the embedding $V\hookrightarrow W^{1,p}(\Omega)$ and the following estimate
	\begin{align*}
		\langle I_{K,\varepsilon}'(0),u\rangle\le \|I_{K,\varepsilon}'(0)\|_{V^*}\|u\|_V,
	\end{align*}
	it is not difficult to apply the inequality
	\begin{align*}
		\max\left\{e_f\delta(\theta_2))+c_j\left(\lambda_{1,p}^{S}\right)^{-1}, g_f\delta(\theta_3)+c_j\left(\lambda_{1,p}^{S}\right)^{-1}\delta(\theta_1)\right\}<1,
	\end{align*}
	in order to conclude that $u\mapsto \mathcal A(u)+I_{K,\varepsilon}'(u)$  is coercive on $V$.

	Now we are in a position to apply Theorem 3.74 Mig\'{o}rski, Ochal \& Sofonea \cite{Migorski-Ochal-Sofonea-2013} which yields that $u\mapsto \mathcal A(u)+I_{K,\varepsilon}'(u)$ is surjective. Therefore, for every $\varepsilon>0$, the inclusion \eqref{eqns3.7} has at least one solution $u_\varepsilon\in V$.

	Let $\{\varepsilon_n\}_{n\in\N}$ be a positive sequence such that $\varepsilon_n\to 0$ as $n\to\infty$ and let $u_n:=u_{\varepsilon_n}$ be a solution of problem \eqref{eqns3.7} corresponding to $\varepsilon=\varepsilon_n$ for every $n\in\N$. We claim that the sequence $\{u_n\}_{n\in\N}$ is bounded in $V$. Arguing indirectly, suppose that $\|u_n\|_V\to +\infty$ as $n\to\infty$. Since $u_n$ is a solution of problem \eqref{eqns3.7}, there exist $\xi_n\in \mathcal N_f(u_n)$ and $\eta_n\in\partial J(\gamma u_n)$ such that
	\begin{align*}
		\langle A(u_n)-i^*\xi_n+\gamma^*\eta_n+I_{K,\varepsilon_n}'(u_n),u_n\rangle= 0.
	\end{align*}
	Hence,
	\begin{align*}
		\langle A(u_n)-i^*\xi_n+\gamma^*\eta_n,u_n\rangle= \langle I_{K,\varepsilon_n}'(u_n),-u_n\rangle \le I_{K,\varepsilon_n}(0)-I_{K,\varepsilon_n}(u_n),
	\end{align*}
	where the last inequality is obtained by using the convexity of $I_{K,\varepsilon_n}$.  From the definitions of $I_K$ and $I_{K,\varepsilon_n}$, we can see that $0\le I_{K,\varepsilon}(w)\le I_{K}(w)$ for all $w\in V$. Since $0\in K$, we get $ I_{K,\varepsilon}(0)=0$. So, it holds
	\begin{align*}
		\langle A(u_n)-i^*\xi_n+\gamma^*\eta_n,u_n\rangle= \langle I_{K,\varepsilon_n}'(u_n),-u_n\rangle \le 0.
	\end{align*}
	Using the inequality above along with \eqref{eqns3.15} and \eqref{eqns3.16}, it is not difficult to prove that
	\begin{align*}
		0\ge \langle A(u_n)-i^*\xi_n+\gamma^*\eta_n,u_n\rangle\to +\infty.
	\end{align*}
	But this is a contradiction. Thus, the sequence $\{u_n\}_{n\in\N}$ is bounded in $V$. Passing to a subsequence if necessary, we may assume that
	\begin{align}\label{eqns3.18}
		u_n\wto u\quad \text{in }V
	\end{align}
	for some $u\in V$.

	For any $v\in V$ we have
	\begin{align}\label{eqns3.19}
		\begin{split}
			0
			&=\langle A(u_n)-i^*\xi_n+\gamma^*\eta_n+I_{K,\varepsilon_n}'(u_n),v-u_n\rangle\\
			&\le \langle A(u_n)-i^*\xi_n+\gamma^*\eta_n,v-u_n\rangle+I_{K,\varepsilon_n}(v)-I_{K,\varepsilon_n}(u_n),
		\end{split}
	\end{align}
	where the last inequality uses the convexity of $I_{K,\varepsilon}$. Employing \eqref{eqns3.18} and Lemma \ref{lemma2}(iii) yields
	\begin{align}\label{eqns3.20}
		\limsup_{n\to\infty}I_{K,\varepsilon_n}(v)\le I_{K}(v)
		\quad\text{and}\quad
		\liminf_{n\to\infty}I_{K,\varepsilon_n}(u_n)\ge I_{K}(u).
	\end{align}
	Using \eqref{eqns3.3} and \eqref{eqns3.4} again, we conclude that the sequences $\{\xi_n\}_{n\in\N}$ and $\{\eta_n\}_{n\in\N}$ are bounded in $L^{q_1'}(\Omega)$ and $L^{q_2'}(\Gamma_2)$, respectively. So, we may assume that
	\begin{align}\label{eqns3.21}
		\xi_n\wto \xi\quad\text{ in }L^{q_1'}(\Omega)
		\quad\text{and}\quad
		\eta_n\wto \eta \quad \text{in }L^{q_2'}(\Gamma_2),
	\end{align}
	for some $\xi\in L^{q_1'}(\Omega)$ and $\eta\in L^{q_2'}(\Gamma_2)$. However, the compactness of the embeddings $V\hookrightarrow L^{q_1}(\Omega)$ and $V\hookrightarrow L^{q_2}(\Gamma_2)$ along with the  convergence in \eqref{eqns3.18} deduce that $u_n\to u$ in $L^{q_1}(\Omega)$ and $L^{q_2}(\Gamma_2)$. This gives
	\begin{align}\label{eqns3.22}
		\langle -i^*\xi_n+\gamma^*\eta_n,u-u_n\rangle=0.
	\end{align}
	Setting $v= u$ in \eqref{eqns3.19} and passing to the upper limit as $n\to\infty$, by applying \eqref{eqns3.20} with $v=u$ and \eqref{eqns3.22} we obtain
	\begin{align*}
		&\limsup_{n\to\infty}\langle A(u_n),u_n-u\rangle \\
		&=\limsup_{n\to\infty}\left[\langle -i^*\xi_n+\gamma^*\eta_n,u-u_n\rangle+I_{K,\varepsilon_n}(u)-I_{K,\varepsilon_n}(u_n)\right]\\
		&\le \limsup_{n\to\infty}\langle -i^*\xi_n+\gamma^*\eta_n,u-u_n\rangle+\limsup_{n\to\infty}I_{K,\varepsilon_n}(u)-\liminf_{n\to\infty}I_{K,\varepsilon_n}(u_n)
		\le 0.
	\end{align*}
	The latter combined with \eqref{eqns3.18} and the $(\Ss_+)$-property of $A$ (see Proposition \ref{prop1}), confesses that $u_n\to u$ in $V$.  As we have done before, it can be verified that $\xi\in \mathcal N_f(u)$ and $\eta\in \partial J(\gamma u)$.

	Passing to the upper limit as $n\to\infty$ in \eqref{eqns3.19} by using \eqref{eqns3.20} and \eqref{eqns3.21} as well as the continuity of $A$, we have
	\begin{align*}
		0\le \langle A(u)-i^*\xi+\gamma^*\eta,v-u\rangle+I_{K}(v)-I_{K}(u)
	\end{align*}
	for all $v\in V$, where $\xi\in \mathcal N_f(u)$ and $\eta\in \partial J(\gamma u)$. This means that $u$ solves the inclusion problem \eqref{eqns3.6}. Consequently, $u\in K$ is also a weak solution of problem \eqref{eqn1}.\\
	
	{\bf II: Boundedness.} Arguing by contradiction, suppose that the solution set $\mathcal S$ of problem \eqref{eqn1} is unbounded. Then, we are able to find a sequence $\{u_n\}_{n\in\N}\subset \mathcal S$ such that $\|u_n\|_V\to \infty$ as $n\to\infty$. Arguing as above, for each $n\in\mathbb N$, we have $\xi_n\in L^{q_1'}(\Omega)$ with $\xi_n(x)\in f(x,u_n(x),\nabla u_n(x))$ for a.\,a.\,$x\in \Omega$ and
	\begin{align*}
		&\int_\Omega|\nabla u_n|^{p-2} \nabla u_n \cdot \nabla u_n\,\mathrm{d}x+ \int_\Omega\mu(x)|\nabla u_n|^{q-2} \nabla u_n \cdot \nabla u_n\,\mathrm{d}x+\int_\Omega\l(|u_n|^p+\mu(x)|u_n|^q\r)\,\mathrm{d}x\\
		&\quad -\int_\Omega\xi_n(x)u_n\,\mathrm{d}x
		 -\int_{\Gamma_2}j^\circ(x,u_n;-u_n)\,\mathrm{d}\Gamma\le 0.
	\end{align*}
	Applying \eqref{eqns3.14}--\eqref{eqns3.16} leads to
	\begin{align*}
		0 \geq &\int_\Omega|\nabla u_n|^{p-2} \nabla u_n \cdot \nabla u_n\,\mathrm{d}x+ \int_\Omega\mu(x)|\nabla u_n|^{q-2} \nabla u_n \cdot \nabla u_n\,\mathrm{d}x+\int_\Omega\l(|u_n|^p+\mu(x)|u_n|^q\r)\,\mathrm{d}x\\
		& -\int_\Omega\xi_n(x)u_n\,\mathrm{d}x-\int_{\Gamma_2}j^\circ(x,u_n;-u_n)\,\mathrm{d}\Gamma \to +\infty,
	\end{align*}
	which is a contradiction. Therefore,  the  solution set $\mathcal S$ of problem \eqref{eqn1} is bounded.\\
	
	{\bf III: Closedness.} Let $\{u_n\}_{n\in\N}\subset \mathcal S$ be any sequence such that $u_n\wto u$ in $V$. Then, for each $n\in\mathbb N$,  there exists $\eta_n\in L^{q_1'}(\Omega)$ such that $\eta_n(x)\in f(x,u_n(x),\nabla u_n(x))$ for a.\,a.\,$x\in \Omega$ and
	\begin{align}\label{eqns3.23}
		\begin{split}
			&\int_\Omega \left(|\nabla u_n|^{p-2} \nabla u_n \cdot \nabla (v-u_n)+\mu(x)|\nabla u_n|^{q-2} \nabla u_n\cdot \nabla (v- u_n)\right)\,\mathrm{d}x\\
			&+\int_\Omega\left(|u_n|^{p-2}u_n+\mu(x)|u_n|^{q-2}u_n\right)(v-u_n)\,\mathrm{d}x+\int_{\Gamma_2}j^\circ(x,u_n;v-u_n)\,\mathrm{d}\Gamma\\
&\ge \int_\Omega \eta_n(x) (v-u_n)\,\mathrm{d}x
		\end{split}
	\end{align}
	for all $v\in K$. The convexity and the closedness of $K$ ensures that $u\in K$. Recall that the embeddings $V \hookrightarrow L^{q_1}(\Omega)$ and $V\hookrightarrow L^{q_2}(\Gamma_2)$ are both compact and $\{\eta_n\}_{n\in\N}$ is bounded in $L^{q_1'}(\Omega)$ (see hypothesis H($f$)(iii)). Therefore, we have
	\begin{align*}
		\limsup_{n\to\infty}\int_{\Gamma_2}j^\circ(x,u_n;v-u_n)\,\mathrm{d}\Gamma
		&\le\int_{\Gamma_2}\limsup_{n\to\infty}j^\circ(x,u_n;v-u_n)\,\mathrm{d}\Gamma\\
		&\le \int_{\Gamma_2}j^\circ(x,u;v-u)\,\mathrm{d}\Gamma,
	\end{align*}
	and
	\begin{align*}
		&\lim_{n\to\infty}\int_\Omega \eta_n(x) (u-u_n)\,\mathrm{d}x=0,
	\end{align*}
	where we have used Fatou's Lemma and the upper semicontinuity of $(s,t)\mapsto j^\circ(x,s;t)$. Taking $v=u$ in \eqref{eqns3.23} and passing to the upper limit as $n\to \infty$ for the resulting inequality, we get
	\begin{align*}
		& \limsup_{n\to\infty}\langle A(u_n),u_n-u \rangle
		\le \limsup_{n\to\infty}\int_{\Gamma_2}j^\circ(x,u_n;v-u_n)\,\mathrm{d}\Gamma-\lim_{n\to\infty}\int_\Omega \eta_n(x) (u-u_n)\,\mathrm{d}x\le 0.
	\end{align*}
	This together with the convergence $u_n\wto u$ in $V$ and the $(\Ss_+)$-property of $A$ (see Proposition \ref{prop1}) deduces that $u_n\to u$ in $V$.

	From hypotheses H($f$) and boundedness of $\{\eta_n\}_{n\in\N}$, we can show that $\eta_n\wto \eta$ in $L^{q_1'}(\Omega)$ with some $\eta\in L^{q_1'}(\Omega)$ such that $\eta(x)\in f(x,u(x),\nabla u(x))$ for a.\,a.\,$x\in \Omega$. Taking the upper limit in inequality \eqref{eqns3.23} yields
	\begin{align*}
		&\int_\Omega \left(|\nabla u|^{p-2} \nabla u \cdot \nabla (v-u)+\mu(x)|\nabla u|^{q-2} \nabla u\cdot \nabla (v- u)\right)\,\mathrm{d}x\nonumber\\
		&+\int_\Omega\left(|u|^{p-2}u+\mu(x)|u|^{q-2}u\right)(v-u)\,\mathrm{d}x+\int_{\Gamma_2}j^\circ(x,u;v-u)\,\mathrm{d}\Gamma\\
&\ge \int_\Omega \eta(x) (v-u)\,\mathrm{d}x
	\end{align*}
	for all $v\in K$ with $\eta(x)\in f(x,u(x),\nabla u(x))$ for a.\,a.\,$x\in \Omega$. Thus, $u\in \mathcal S$, namely, $\mathcal S$ is weakly closed in $V$.
\end{proof}

Particularly, if $\Phi(x)\equiv+\infty$, then we have $K=V$. In this situation, we could carry out the same arguments as in the proof of Theorem \ref{theorems3.3} to get the following result.
\begin{theorem}
	Let hypotheses  \eqref{conditions-p-q-mu}, \textnormal{H($f$)} and \textnormal{H($\Phi$)} be satisfied. Then, the solution set of the elliptic inclusion
	\begin{equation*}
		\begin{aligned}
			D_\mu(u)+|u|^{p-2}u+\mu(x)|u|^{q-2}u &\in f(x,u,\nabla u)\quad && \text{\textnormal{in} } \Omega,\\
			u  &= 0 &&\text{\textnormal{on} } \Gamma_1,\\
			\frac{\partial u}{\partial \nu_\mu}&\in -\partial j(x,u)&&\text{\textnormal{on} }\Gamma_2,
		\end{aligned}
	\end{equation*}
	is nonempty, bounded and weakly closed in $V$.
\end{theorem}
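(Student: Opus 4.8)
The plan is to recognise that the hypothesis $\Phi(x)\equiv+\infty$ forces $K=V$, so that the obstacle disappears entirely and the argument reduces to — indeed, simplifies — the proof of Theorem \ref{theorems3.3}. With $K=V$ the indicator function $I_K$ vanishes identically on $V$, whence $\partial_c I_K\equiv\{0\}$ and every Moreau–Yosida approximation satisfies $I_{K,\varepsilon}\equiv 0$ and $I_{K,\varepsilon}'\equiv 0$. Consequently the inclusion \eqref{eqns3.6} becomes
$$A(u)-i^*\mathcal N_f(u)+\gamma^*\partial J(\gamma u)\ni 0\quad\text{in }V^*,$$
and testing the weak formulation with $v=u\pm w$ over the linear space $V$ shows that this operator inclusion is equivalent to weak solvability of the reduced problem. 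No $\varepsilon$-approximation is needed: I would apply the surjectivity theorem directly to $\mathcal A(u):=A(u)-i^*\mathcal N_f(u)+\gamma^*\partial J(\gamma u)$.

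First I would establish, verbatim from the proof of Theorem \ref{theorems3.3}, that $\mathcal A$ is bounded and has nonempty, bounded, closed and convex values; the estimates \eqref{eqns3.3} and \eqref{eqns3.4} together with Proposition \ref{prop1} are unaffected by the absence of the obstacle. The pseudomonotonicity of $\mathcal A$ is likewise proved word for word, since the $(\Ss_+)$-property of $A$, the strong-to-weak closedness of $\partial J$, and the Aubin–Frankowska closure theorem applied to $\mathcal N_f$ do not involve $K$ at all. For coercivity I would reuse the splitting \eqref{eqns3.14} with the term $\langle I_{K,\varepsilon}'(0),u\rangle$ and the Moreau–Yosida remainder deleted, and then invoke \eqref{eqns3.15} (governed by H($j$)(iv)) and \eqref{eqns3.16} (governed by H($f$)(iv)); since the regularising term is gone there is now one fewer quantity to absorb, so the structural inequality $e_f\delta(\theta_2)+g_f\lambda_1\delta(\theta_3)+c_j\lambda_2\delta(\theta_1)<1$ yields coercivity immediately. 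An application of Theorem 3.74 of Mig\'orski, Ochal \& Sofonea \cite{Migorski-Ochal-Sofonea-2013} then produces $u\in V$ with $0\in\mathcal A(u)$, i.e.\ the desired weak solution.

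The boundedness and weak closedness of the solution set follow from Parts II and III of the proof of Theorem \ref{theorems3.3} essentially unchanged. For boundedness, the contradiction argument tests the relation satisfied by a putative unbounded sequence against itself and derives, via \eqref{eqns3.14}--\eqref{eqns3.16}, that a quantity bounded above by $0$ tends to $+\infty$; with $K=V$ this is only cleaner, as $\langle A(u_n)-i^*\xi_n+\gamma^*\eta_n,u_n\rangle=0$ now holds as an identity. For closedness, a weakly convergent sequence $u_n\weak u$ of solutions lies in $K=V$ automatically, Fatou's lemma and the upper semicontinuity of $(s,t)\mapsto j^\circ(x,s;t)$ control the boundary term, the $(\Ss_+)$-property upgrades weak to strong convergence, and the closure of $\mathcal N_f$ passes the inclusion $\eta(x)\in f(x,u(x),\nabla u(x))$ to the limit.

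The main obstacle is, in truth, merely bookkeeping: one must confirm that nowhere in the proof of Theorem \ref{theorems3.3} is the specific structure of $K$ used beyond its convexity and closedness — both of which $V$ trivially enjoys — and that discarding the Moreau–Yosida regularisation can only weaken the estimates required for coercivity and for the limit passages. The one genuinely new, though routine, point is the equivalence, over the full linear space $V$, between the variational inequality defining a weak solution and the operator inclusion $0\in\mathcal A(u)$, obtained by choosing the test functions $v=u\pm w$.
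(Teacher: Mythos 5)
Your proposal is correct and follows essentially the same route as the paper, which simply observes that with $\Phi\equiv+\infty$ one has $K=V$ and then reruns the proof of Theorem \ref{theorems3.3}; your explicit remarks that $I_{K,\varepsilon}\equiv 0$ renders the Moreau--Yosida step vacuous and that the variational inequality over the linear space $V$ is equivalent to the operator inclusion are just that rerun, streamlined. Note only that the boundary estimates you invoke require hypothesis H($j$), which the theorem as printed omits in favour of the now-irrelevant H($\Phi$) --- presumably a typo in the statement rather than a gap in your argument.
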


Furthermore, when meas($\Gamma_2$)$=0$, that is, $\Gamma=\Gamma_1$, then we have the following corollary.
\begin{corollary}\label{corollarys1}
	Let hypotheses \eqref{conditions-p-q-mu}, \textnormal{H($f$)} and \textnormal{H($j$)} be satisfied. Then, the solution set of problem (\ref{eqn1.0}) is nonempty, bounded and weakly closed in $W^{1,\mathcal H}_0(\Omega)$, where $W^{1,\mathcal H}_0(\Omega)$ is the completion of $C_0^\infty(\Omega)$ in $W^{1,\mathcal H}(\Omega)$, namely,
	\begin{align*}
		W_0^{1,\mathcal H}(\Omega)=\overline {C^\infty_0(\Omega)}^{W^{1,\mathcal H}(\Omega)}.
	\end{align*}
\end{corollary}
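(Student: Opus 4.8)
The plan is to derive the corollary as a direct specialization of Theorem \ref{theorems3.3} to the degenerate situation $\meas(\Gamma_2)=0$, in which case $\Gamma=\Gamma_1$ and every boundary contribution attached to $\Gamma_2$ disappears. First I would observe that, under the standing Lipschitz assumption on $\Gamma$ together with the embedding structure of Proposition \ref{proposition_embeddings}, the closed subspace
\begin{align*}
	V=\{u\in W^{1,\mathcal H}(\Omega)\mid u=0\text{ on }\Gamma_1\}
\end{align*}
coincides, when $\Gamma_1=\Gamma$, with the completion $W^{1,\mathcal H}_0(\Omega)=\overline{C_0^\infty(\Omega)}^{W^{1,\mathcal H}(\Omega)}$. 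This is the Musielak--Orlicz analogue of the classical trace-zero characterization of $W^{1,p}_0(\Omega)$, and it is the only genuinely new point, since everything else is a verbatim transcription of the argument for Theorem \ref{theorems3.3}.

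Second, I would record the simplifications forced by $\meas(\Gamma_2)=0$: the trace operator $\gamma$ is no longer needed, the functional $J$ and its generalized gradient $\partial J$ vanish identically, and consequently the inclusion \eqref{eqns3.6} reduces to $u\mapsto A(u)-i^*\mathcal N_f(u)+\partial_cI_K(u)\ni 0$, the boundary integral $\int_{\Gamma_2}j^\circ(x,u;v-u)\,\mathrm d\Gamma$ dropping out of the weak formulation. Hypothesis \textnormal{H($j$)} thereby becomes vacuous; in particular the term $c_j\lambda_2\delta(\theta_1)$ may be discarded, so that the coercivity inequality in \textnormal{H($f$)(iv)} collapses to $e_f\delta(\theta_2)+g_f\lambda_1\delta(\theta_3)<1$. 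The obstacle is still present, so the convex set $K$ of \eqref{defK} and the requirement $\Phi\in L^{q_1'}(\Omega)_+$ from \textnormal{H($\Phi$)} must be retained, and Remark \ref{remarks3.1} continues to guarantee that $K$ is nonempty, closed and convex with $0\in K$.

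Third, with these reductions in place I would rerun the three stages of the proof of Theorem \ref{theorems3.3} unchanged. In the existence step, the pseudomonotonicity and boundedness of $\mathcal A(u)=A(u)-i^*\mathcal N_f(u)$ follow from Proposition \ref{prop1} and the growth bound \eqref{eqns3.4} exactly as before (the boundary estimate \eqref{eqns3.3} is simply not invoked), the Moreau--Yosida regularization $I_{K,\varepsilon}'$ of $\partial_cI_K$ is handled through Lemma \ref{lemma2}, coercivity is obtained from \eqref{eqns3.14} and \eqref{eqns3.16} by means of the simplified inequality above, and the surjectivity theorem then produces approximate solutions $u_\varepsilon$ whose limit solves the inclusion. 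The boundedness and the weak closedness of the solution set are deduced by the same contradiction and $(\Ss_+)$-arguments, with the $\Gamma_2$-integrals absent throughout.

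The main obstacle is precisely the space identification $V=W^{1,\mathcal H}_0(\Omega)$: apart from it the corollary is a pure specialization, but matching the conclusion to the stated ambient space rests on the density of $C_0^\infty(\Omega)$ together with the trace-zero characterization in the Musielak--Orlicz--Sobolev setting, which in turn relies on the regularity of $\Gamma$ and on the admissibility conditions \eqref{conditions-p-q-mu} relating $p$, $q$ and $\mu$. Once this identification is granted, no further analytical difficulty arises.
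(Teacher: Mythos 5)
Your overall strategy -- read Corollary \ref{corollarys1} as the specialization of Theorem \ref{theorems3.3} to $\meas(\Gamma_2)=0$, drop every $\Gamma_2$-term, keep \textnormal{H($\Phi$)} and the convex set $K$, and rerun the three stages of the existence/boundedness/closedness argument -- is exactly what the paper intends (it offers no separate proof), and your bookkeeping of the simplifications (the coercivity condition collapsing to $e_f\delta(\theta_2)+g_f\lambda_1\delta(\theta_3)<1$, the disappearance of $\gamma$, $J$ and $\partial J$) is accurate. You also rightly note that the hypothesis \textnormal{H($\Phi$)} must be in force even though the statement lists \textnormal{H($j$)} instead; that appears to be a slip in the paper, not in your argument.

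The one step I would not let stand as written is the identification $V=W^{1,\mathcal H}_0(\Omega)$, which you present as ``the Musielak--Orlicz analogue of the classical trace-zero characterization.'' Under the standing assumptions \eqref{conditions-p-q-mu} the modulating coefficient $\mu$ is merely bounded and measurable, and in the double phase setting the density of $C_0^\infty(\Omega)$ in the trace-zero subspace (equivalently, the absence of a Lavrentiev-type gap) is \emph{not} automatic: the known density results require additional regularity of $\mu$ (Lipschitz or log-H\"older type conditions) together with a restriction such as $q/p\le 1+1/N$. So the ``only genuinely new point'' of your proof is precisely the point that is not justified by anything in the paper. The fix is to bypass the identification entirely: take $W^{1,\mathcal H}_0(\Omega)=\overline{C_0^\infty(\Omega)}^{W^{1,\mathcal H}(\Omega)}$ as the ambient space from the outset and run the proof of Theorem \ref{theorems3.3} verbatim on it. Everything used there -- reflexivity, the compact embeddings of Proposition \ref{proposition_embeddings}(ii), the $(\Ss_+)$-property of $A$ restricted to a closed subspace, the Poincar\'e inequality, the fact that $0\in K\cap W^{1,\mathcal H}_0(\Omega)$, Lemma \ref{lemma2} for the Moreau--Yosida regularization of $I_K$ -- holds on $W^{1,\mathcal H}_0(\Omega)$ without any density statement, and this is how the cited reference \cite{Zeng-Gasinski-Winkert-Bai-2021} proceeds. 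With that replacement your proof is complete.
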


\begin{remark}
	We point out that Corollary \ref{corollarys1} coincides with Theorem 3.3. of Zeng, Gasi\'{n}ski, Winkert \& Bai \cite{Zeng-Gasinski-Winkert-Bai-2021}. However, the proof of \cite[Theorem 3.3]{Zeng-Gasinski-Winkert-Bai-2021} is different from ours, since it is based on a surjectivity result of Le \cite{Le-2011} for multivalued mappings generated by the sum of a maximal monotone multivalued operator and a bounded multivalued pseudomonotone mapping. Additionally, hypotheses \eqref{conditions-p-q-mu} and {\rm H($f$)(iv)} are weaker than the ones used in \cite{Zeng-Gasinski-Winkert-Bai-2021}.
\end{remark}

\section{Convergence analysis}\label{Section4}
The problems with different constraints (for example, problems with obstacle effect) lead to various difficulties in numerical analysis and the study of regularity of solutions. In order to bypass and overcome these difficulties, some appropriate and useful approximating methods have been introduced and developed. Among these, penalty method as a  powerful and useful approach has been widely applied to study problems with obstacle constraints.  Based on this motivation, the section is devoted to apply a penalty for introducing a family of elliptic approximating problems without obstacle constraints associated with problem \eqref{eqn1}, and to establish a critical convergence theorem which shows that the solution set $\mathcal S$ can be approximated by the solution sets of approximating problems, denoted by $\{\mathcal S_n\}_{n\in\N}$, in the sense of Kuratowski. This convergence theorem will help for numerical analysis and stability research to double phase problems with obstacle constraints.

In what follows, we assume that $\{\rho_n\}_{n\in\N}$ is a sequence with $\rho_n>0$ for each $n\in\mathbb N$ such that $\rho_n\to 0$. Let us introduce a penalty operator $B\colon L^{q_1}(\Omega)\to L^{q_1'}(\Omega)$ associated to the set $K$ defined by
\begin{align}\label{defB}
	\langle Bu,v\rangle_{L^{q_1}(\Omega)}
	=\int_\Omega\left(u-\Phi\right)^+v\,\mathrm{d}x\quad \text{for all }u,v\in L^{q_1}(\Omega).
\end{align}

The following lemma gives some important properties of $B$, see Zeng, Bai, Gasi\'{n}ski \& Winkert \cite[Lemma 3.3]{Zeng-Bai-Gasinski-Winkert-2021}.
\begin{lemma}\label{lemmas4.1}
	If hypothesis {\rm H($\Phi$)} holds, then the function $B\colon L^{q_1}(\Omega)\to L^{q_1'}(\Omega)$ given in \eqref{defB} is bounded, demicontinuous and monotone.
\end{lemma}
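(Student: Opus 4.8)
The plan is to verify the three properties of the penalty operator $B$ defined in \eqref{defB} directly from its definition, exploiting the structure of the positive part $(\cdot)^+$ and the hypothesis H($\Phi$) that guarantees $\Phi\in L^{q_1'}(\Omega)$. First I would check that $B$ is well-defined and bounded: for $u\in L^{q_1}(\Omega)$, the pointwise estimate $0\le (u-\Phi)^+\le |u|+|\Phi|$ together with H($\Phi$) shows that $(u-\Phi)^+\in L^{q_1}(\Omega)$, hence $Bu\in L^{q_1'}(\Omega)$ after identifying $L^{q_1}(\Omega)^*$ with $L^{q_1'}(\Omega)$; moreover the triangle inequality yields $\|Bu\|_{q_1',\Omega}\le \|u\|_{q_1,\Omega}+\|\Phi\|_{q_1,\Omega}$, so bounded sets map to bounded sets.

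Next I would establish monotonicity, which I expect to be the cleanest part. The key observation is that the scalar map $t\mapsto t^+$ is monotone, so for any real numbers $a,b$ one has $(a^+-b^+)(a-b)\ge 0$. Applying this pointwise with $a=u(x)-\Phi(x)$ and $b=v(x)-\Phi(x)$ gives
\begin{align*}
	\langle Bu-Bv,u-v\rangle_{L^{q_1}(\Omega)}
	=\int_\Omega\left((u-\Phi)^+-(v-\Phi)^+\right)(u-v)\,\mathrm{d}x\ge 0,
\end{align*}
since $(u-\Phi)-(v-\Phi)=u-v$. This confirms monotonicity.

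Finally I would prove demicontinuity, namely that $u_n\to u$ strongly in $L^{q_1}(\Omega)$ implies $Bu_n\weak Bu$ weakly in $L^{q_1'}(\Omega)$. The approach is to pass to a subsequence along which $u_n\to u$ almost everywhere and is dominated by a fixed $L^{q_1}$-function (Lebesgue/Riesz). The continuity of $t\mapsto t^+$ then gives $(u_n-\Phi)^+\to(u-\Phi)^+$ almost everywhere, while the domination together with H($\Phi$) supplies an $L^{q_1}$-majorant; the dominated convergence theorem yields $(u_n-\Phi)^+\to(u-\Phi)^+$ strongly in $L^{q_1}(\Omega)$, hence $Bu_n\to Bu$ strongly in $L^{q_1'}(\Omega)$, which is stronger than the required weak convergence. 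A standard subsequence-of-subsequence argument (every subsequence has a further subsequence converging to the same limit $Bu$) then promotes this to convergence of the full sequence. The main obstacle is purely bookkeeping: ensuring the domination hypothesis is invoked correctly so that dominated convergence applies, but no genuine difficulty arises because the nonlinearity $(\cdot)^+$ is globally Lipschitz with constant one and $\Phi$ lies in the relevant space by H($\Phi$).
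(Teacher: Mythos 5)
The paper offers no proof of this lemma---it is quoted from Zeng, Bai, Gasi\'nski \& Winkert \cite[Lemma 3.3]{Zeng-Bai-Gasinski-Winkert-2021}---so your attempt can only be judged on its own terms. Your overall strategy (pointwise monotonicity of $t\mapsto t^+$, its $1$-Lipschitz continuity, dominated convergence plus a subsequence-of-subsequences argument) is the standard and correct one, and the monotonicity step is fine as written.

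There is, however, a genuine gap in the boundedness and demicontinuity steps: the exponent bookkeeping is backwards. For $v\mapsto\int_\Omega(u-\Phi)^+v\,\mathrm{d}x$ to define an element of $L^{q_1'}(\Omega)=(L^{q_1}(\Omega))^*$, the density $(u-\Phi)^+$ must itself lie in $L^{q_1'}(\Omega)$; showing, as you do, that $(u-\Phi)^+\in L^{q_1}(\Omega)$ does not accomplish this---you have applied the identification of $L^{q_1}(\Omega)^*$ with $L^{q_1'}(\Omega)$ to the wrong factor of the pairing. On the bounded domain $\Omega$ one has $L^{q_1}(\Omega)\hookrightarrow L^{q_1'}(\Omega)$ only when $q_1\ge 2$, so your argument does not cover $1<q_1<2$. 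The displayed estimate $\|Bu\|_{q_1',\Omega}\le\|u\|_{q_1,\Omega}+\|\Phi\|_{q_1,\Omega}$ is likewise inconsistent: it compares a $q_1'$-norm with $q_1$-norms, and H($\Phi$) only gives $\Phi\in L^{q_1'}(\Omega)$, so $\|\Phi\|_{q_1,\Omega}$ need not be finite when $q_1>2$. The same defect reappears in the demicontinuity step, where dominated convergence yields $(u_n-\Phi)^+\to(u-\Phi)^+$ in $L^{q_1}(\Omega)$ whereas you need convergence in $L^{q_1'}(\Omega)$. The repair is to run all estimates directly in the $L^{q_1'}$-norm, starting from $0\le(u-\Phi)^+\le|u|+\Phi$ and the triangle inequality in $L^{q_1'}(\Omega)$---this is consistent with the bound $\|Bu\|_{V^*}\le c_B(\|\Phi\|_{q_1',\Omega}+\|u\|_{q_1',\Omega})$ used in the proof of Theorem \ref{main_theorem}---or to work with the classical penalty $Bu=((u-\Phi)^+)^{q_1-1}$, which genuinely maps $L^{q_1}(\Omega)$ into $L^{q_1'}(\Omega)$ and for which your scheme goes through verbatim. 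As it stands, your proof establishes the lemma only for $q_1\ge 2$.
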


For each $n\in\mathbb N$, we consider the following approximating problem corresponding to problem \eqref{eqn1}:
\begin{equation}\label{eqns4.1}
	\begin{aligned}
		D_\mu(u)+|u|^{p-2}u+\mu(x)|u|^{q-2}u+\frac{1}{\rho_n}\left(u-\Phi\right)^+  &\in f(x,u,\nabla u)&& \text{in } \Omega,\\
		u  &= 0 &&\text{on } \Gamma_1,\\
		\frac{\partial u}{\partial \nu_\mu} & \in -\partial j(x,u)&&\text{on }\Gamma_2.
	\end{aligned}
\end{equation}

The weak solutions of problem \eqref{eqns4.1} are understood as follows.
\begin{definition}
	A function $u\in V$ is called a weak solution of problem \eqref{eqns4.1} if there exists $\eta\in L^{q_1'}(\Omega)$ such that $\eta(x)\in f(x,u(x),\nabla u(x))$ for a.\,a.\,$x\in \Omega$ and
	\begin{align*}
		&\int_\Omega \left(|\nabla u|^{p-2} \nabla u \cdot \nabla (v-u)+\mu(x)|\nabla u|^{q-2} \nabla u\cdot \nabla (v- u)\right)\,\mathrm{d}x+\frac{1}{\rho_n}\int_\Omega\left(u-\Phi\right)^+(v-u)\,\mathrm{d}x\\
		&\quad+\int_\Omega\left(|u|^{p-2}u+\mu(x)|u|^{q-2}u\right)(v-u)\,\mathrm{d}x+\int_{\Gamma_2}j^\circ(x,u;v-u)\,\mathrm{d}\Gamma\\
		&\ge \int_\Omega \eta(x) (v-u)\,\mathrm{d}x
	\end{align*}
for all $v\in V$.
\end{definition}

The main results in the section concerning the existence and convergence properties to problem \eqref{eqns4.1} are given in the next theorem.

\begin{theorem}\label{main_theorem}
	Let hypotheses \eqref{conditions-p-q-mu}, \textnormal{H($f$)}, \textnormal{H($\Phi$)} and \textnormal{H($j$)} be satisfied. If $\{\rho_n\}_{n\in\N}$ is a sequence with $\rho_n>0$ for each $n\in\mathbb N$ such that $\rho_n\to 0$ as $n\to\infty$, then the following assertions hold true:
	\begin{enumerate}
		\item[{\rm (i)}]
			For each $n\in \mathbb N$, the set $\mathcal S_n$ of weak solutions to problem \eqref{eqns4.1} is nonempty, bounded and weakly closed in $V$.
	\item[{\rm (ii)}]
		It holds
		\begin{align*}
			\emptyset\neq w\text{-}\limsup\limits_{n\to\infty}\mathcal S_n=s\text{-}\limsup\limits_{n\to\infty}\mathcal S_n\subset \mathcal S.
		\end{align*}
	\item[{\rm (iii)}]
		For each $u\in s\text{-}\limsup\limits_{n\to\infty}\mathcal S_n$ and any sequence $\{\widetilde u_n\}_{n\in\N}$ with
		\begin{align*}
			\widetilde u_n\in \mathcal T(\mathcal S_n,u)\quad\text{for each }n\in\N,
		\end{align*}
		there exists a subsequence of $\{\widetilde u_n\}_{n\in\N}$ converging strongly to $u$ in $V$, where the set $ \mathcal T(\mathcal S_n,u)$ is defined by
		\begin{align*}
			\mathcal T(\mathcal S_n,u):=\big\{\widetilde u \in \mathcal S_n\,\mid\,\|u-\widetilde u\|_{V}\le\|u-v\|_{V} \text{ for all $ v\in \mathcal S_n$}\big\}.
		\end{align*}
	\end{enumerate}
\end{theorem}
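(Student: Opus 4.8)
The plan is to handle the three assertions in turn, reusing as much of the machinery from the proof of Theorem~\ref{theorems3.3} as possible. For (i) I would repeat the surjectivity argument verbatim, replacing the convex subdifferential $\partial_c I_K$ by the penalized operator $\frac{1}{\rho_n}i^* B i$, with $B$ the penalty operator from \eqref{defB}; by Lemma~\ref{lemmas4.1} this operator is bounded, demicontinuous and monotone, hence pseudomonotone, so that $u\mapsto \mathcal A(u)+\frac{1}{\rho_n}i^* B(iu)$ is pseudomonotone by the same stability result used before. Coercivity is even easier than in Theorem~\ref{theorems3.3}: since $\Phi\geq 0$ one checks $\langle B(iu),iu\rangle_{L^{q_1}(\Omega)}=\into(u-\Phi)^+u\,\mathrm{d}x\geq\|(u-\Phi)^+\|_{2,\Omega}^2\geq 0$, so the penalty term contributes nonnegatively and the estimates \eqref{eqns3.14}--\eqref{eqns3.16}, governed by $e_f\delta(\theta_2)+g_f\lambda_1\delta(\theta_3)+c_j\lambda_2\delta(\theta_1)<1$, go through unchanged. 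The surjectivity theorem then gives $\mathcal S_n\neq\emptyset$, while boundedness and weak closedness follow exactly as in Parts~II and III of Theorem~\ref{theorems3.3}.

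The decisive preliminary step for (ii) is a \emph{uniform} bound: there is $M>0$ with $\|u\|_V\leq M$ for every $u\in\mathcal S_n$ and every $n$. Testing the weak formulation of \eqref{eqns4.1} with $v=0$ and discarding the nonnegative penalty term $\frac{1}{\rho_n}\into(u-\Phi)^+u\,\mathrm{d}x\geq 0$ reduces the estimate to precisely the $n$-independent inequality already established in the boundedness part of Theorem~\ref{theorems3.3}, so the smallness condition yields a bound independent of $n$. Granting this, nonemptiness of the Kuratowski upper limit is immediate: choosing $u_n\in\mathcal S_n$ (possible by (i)) produces a bounded sequence, and any weak limit of a subsequence lies in $w\text{-}\limsup_{n\to\infty}\mathcal S_n$.

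For the chain $\emptyset\neq w\text{-}\limsup=s\text{-}\limsup\subset\mathcal S$, the inclusion $s\text{-}\limsup\subset w\text{-}\limsup$ is trivial. For the converse take $u\in w\text{-}\limsup$, realised by $u_{n_k}\weak u$ with $u_{n_k}\in\mathcal S_{n_k}$, and first show $u\in K$: the uniform bound forces $\|(u_{n_k}-\Phi)^+\|_{2,\Omega}^2\leq C\rho_{n_k}\to 0$, and combining with $u_{n_k}\to u$ a.e. (compactness of $V\hookrightarrow L^{q_1}(\Omega)$) gives $(u-\Phi)^+=0$, i.e. $u\leq\Phi$. Using this admissible $u$ as test function and exploiting $(u_{n_k}-\Phi)^+(u-u_{n_k})\leq 0$ to drop the penalty term, one obtains $\langle A(u_{n_k}),u_{n_k}-u\rangle\leq\int_{\Gamma_2}j^\circ(x,u_{n_k};u-u_{n_k})\,\mathrm{d}\Gamma-\into\eta_{n_k}(u-u_{n_k})\,\mathrm{d}x$; passing to the upper limit via Fatou, the upper semicontinuity of $j^\circ$, boundedness of $\{\eta_{n_k}\}$ and compactness of the embeddings yields $\limsup_k\langle A(u_{n_k}),u_{n_k}-u\rangle\leq 0$, so the $(\Ss_+)$-property of $A$ upgrades the convergence to $u_{n_k}\to u$ in $V$ and places $u$ in $s\text{-}\limsup$. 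This same strong limit, the favourable sign of the penalty term against arbitrary $v\in K$, and the Aubin--Frankowska selection argument showing $\eta\in\mathcal N_f(u)$ let me pass to the limit in the variational inequality and conclude $u\in\mathcal S$, giving $s\text{-}\limsup\subset\mathcal S$. I expect this converse inclusion to be the crux: one must simultaneously secure the uniform bound, certify admissibility of the weak limit ($u\in K$), and keep the sign of $(u_n-\Phi)^+(v-u_n)$ correct so that $(\Ss_+)$ can be invoked—any slip in that sign or in the domination needed for the $j^\circ$ limit would break the argument.

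Finally, for (iii) I would note that each $\mathcal S_n$ is nonempty, bounded and weakly closed in the reflexive space $V$, so the distance $\min_{v\in\mathcal S_n}\|u-v\|_V$ is attained and $\mathcal T(\mathcal S_n,u)\neq\emptyset$. Given $u\in s\text{-}\limsup$, there is a subsequence $u_{n_k}\in\mathcal S_{n_k}$ with $u_{n_k}\to u$ in $V$; the defining minimality of $\widetilde u_{n_k}\in\mathcal T(\mathcal S_{n_k},u)$ then forces $\|u-\widetilde u_{n_k}\|_V\leq\|u-u_{n_k}\|_V\to 0$, which is the required strongly convergent subsequence of $\{\widetilde u_n\}_{n\in\N}$.
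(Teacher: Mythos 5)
Your proposal is correct and follows essentially the same route as the paper: penalization via $B$, the pseudomonotone surjectivity theorem for (i), a uniform bound on $\bigcup_{n}\mathcal S_n$, the sign of $(u_n-\Phi)^+(v-u_n)\le 0$ for $v\in K$ to drop the penalty term, and the $(\Ss_+)$-property of $A$ to upgrade weak to strong convergence. The only deviations are harmless simplifications: your Step 2 uses the bound $\|(u_{n}-\Phi)^+\|_{2,\Omega}^2\le C\rho_{n}$ plus a.e.\ convergence where the paper tests $\frac{1}{\rho_n}\int_\Omega(u_n-\Phi)^+v\,\mathrm{d}x$ against arbitrary $v\in V$, and your proof of (iii) goes directly through the minimality inequality $\|u-\widetilde u_{n_k}\|_V\le\|u-u_{n_k}\|_V\to 0$ without first identifying the weak limit of $\{\widetilde u_n\}_{n\in\N}$ as a solution --- both are valid.
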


\begin{proof}
	{\rm (i)} From Lemma \ref{lemmas4.1} we know that operator $B$ given in \eqref{defB} is continuous, monotone and satisfies the growth condition
	\begin{align*}
		\|Bu\|_{V^*}\le c_B\left(\|\Phi\|_{q_1',\Omega}+\|u\|_{q_1',\Omega}\right)
	\end{align*}
	for all $u\in V$ for some $c_B>0$. Arguing as in the proof of Theorem~\ref{theorems3.3}, we are able to prove that the solution set of problem \eqref{eqns4.1} is nonempty, bounded and weakly closed by considering $\mathcal N_f(\cdot)+B(\cdot)$ instead of $\mathcal N_f(\cdot)$.
	
	{\rm (ii)} The proof of this assertion is divided into three steps.

	{\bf Step 1.} The set $\bigcup\limits_{n\in \mathbb N}\mathcal S_n$ is uniformly bounded in $V$.

	Assume that  $\bigcup\limits_{n\in \mathbb N}\mathcal S_n$  is unbounded in $V$. Passing to a relabeled subsequence if necessary, we are able to find a sequence  $\{u_n\}_{n\in\N}\subset V$ with $u_n\in\mathcal S_n$ for each $n\in\mathbb N$ such that
	\begin{align*}
		\|u_n\|_{V}\to \infty\quad \text{as }n\to\infty.
	\end{align*}
	Then, for every $n\in \mathbb N$, there exists $\eta_n\in L^{q_1'}(\Omega)$ with $\eta_n(x)\in f(x,u_n(x),\nabla u_n(x))$ for a.\,a.\,$x\in \Omega$ such that
	\begin{align*}
		&\int_\Omega \left(|\nabla u_n|^{p-2} \nabla u_n +\mu(x)|\nabla u_n|^{q-2} \nabla u_n\right)\cdot \nabla v\,\mathrm{d}x
		+\frac{1}{\rho_n}\int_\Omega \left(u_n-\Phi\right)^+v\,\mathrm{d}x\\
		&\quad+\int_\Omega\left(|u_n|^{p-2}u_n+\mu(x)|u_n|^{q-2}u_n\right)v\,\mathrm{d}x+\int_{\Gamma_2}j^\circ(x,u_n;v)\,\mathrm{d}\Gamma\\
&\ge \int_\Omega \eta_n(x) v\,\mathrm{d}x
	\end{align*}
	for all $v\in V$. Taking $v=-u_n$ into the inequality above gives
	\begin{align*}
		&\int_\Omega \left(|\nabla u_n|^{p-2} \nabla u_n +\mu(x)|\nabla u_n|^{q-2} \nabla u_n\right)\cdot \nabla u_n\,\mathrm{d}x-\int_\Omega \eta_n(x) u_n\,\mathrm{d}x\\
		&\quad+\int_\Omega\left(|u_n|^{p-2}u_n+\mu(x)|u_n|^{q-2}u_n\right)u_n\,\mathrm{d}x-\int_{\Gamma_2}j^\circ(x,u_n;-u_n)\,\mathrm{d}\Gamma\\
&\le -\frac{1}{\rho_n}\int_\Omega \left(u_n-\Phi\right)^+u_n\,\mathrm{d}x.
	\end{align*}
	From the monotonicity of $B$ and the nonnegativity of $\Phi$ it follows that
	\begin{align*}
		&\int_\Omega \left(|\nabla u_n|^{p-2} \nabla u_n +\mu(x)|\nabla u_n|^{q-2} \nabla u_n\right)\cdot \nabla u_n\,\mathrm{d}x-\int_\Omega \eta_n(x) u_n\,\mathrm{d}x\\
		&\quad+\int_\Omega\left(|u_n|^{p-2}u_n+\mu(x)|u_n|^{q-2}u_n\right)u_n\,\mathrm{d}x-\int_{\Gamma_2}j^\circ(x,u_n;-u_n)\,\mathrm{d}\Gamma\\
&\le -\frac{1}{\rho_n}\int_\Omega\left[ \left(u_n-\Phi\right)^+-\left(0-\Phi\right)^+\right]u_n\,\mathrm{d}x\\
		& \le 0,
	\end{align*}
	that is,
	\begin{align}\label{eqnn16}
		&\|\nabla u_n\|_{p,\Omega}^p+\|\nabla u_n\|_{q,\Omega,\mu}^q+\|u_n\|_{p,\Omega}^p+\|u_n\|_{q,\Omega,\mu}^q-\int_\Omega \eta_n(x) u_n\,\mathrm{d}x\nonumber\\
&-\int_{\Gamma_2}j^\circ(x,u_n;-u_n)\,\mathrm{d}\Gamma\le 0.
	\end{align}
	Let $\varepsilon_1,\varepsilon_2,\varepsilon_3>0$ be arbitrary. From hypotheses H($j$)(iv) and H($f$)(iv), we have
	\begin{align}\label{eqnn16.0}
		\begin{split}
			\int_{\Gamma_2}j^\circ(x,u_n;-u_n)\,\mathrm{d}\Gamma
			& \le \int_{\Gamma_2}|\xi_n(x)u_n|\,\mathrm{d}\Gamma\\
			& \le \int_{\Gamma_2}c_j|u_n|^{\theta_1}+d_j(x)\,\mathrm{d}\Gamma\\
			&\le
			\begin{cases}
				c_j\|u_n\|_{p,\Gamma_2}^p+\|d_j\|_{1,\Gamma_2}&\text{if }\theta_1=p,\\
				\varepsilon_1\|u_n\|_{p,\Gamma_2}^p+c_1(\varepsilon_1)+\|d_j\|_{1,\Gamma_2}&\text{if }\theta_1<p,
			\end{cases}\\
&\le
			\begin{cases}
				c_j(\lambda_{1,p}^S)^{-1}\left(\|\nabla u_n\|_{p,\Omega}^p+\|u_n\|_{p,\Omega}^p\right)+\|d_j\|_{1,\Gamma_2}&\text{if }\theta_1=p,\\
				\varepsilon_1\|u_n\|_{p,\Gamma_2}^p+c_1(\varepsilon_1)+\|d_j\|_{1,\Gamma_2}&\text{if }\theta_1<p,
			\end{cases}
		\end{split}
	\end{align}
	and
	\begin{align}\label{eqnn17}
		\begin{split}
			&\int_\Omega\eta_n(x)u_n\,\mathrm{d}x\\
			&\le
			\begin{cases}
				e_f\|\nabla u_n\|_{p,\Omega}^p+g_f\|u_n\|_{p,\Omega}^p+\|d_f\|_{1,\Omega}&\text{if } \theta_2=\theta_3=p,\\
				e_f\|\nabla u_n\|_{p,\Omega}^p+\varepsilon_2\|u_n\|_{p,\Omega}^p+c_2(\varepsilon_2)+\|d_f\|_{1,\Omega}&\text{if } \theta_2=p \text{ and } \theta_3<p,\\
				\varepsilon_3\|\nabla u_n\|_{p,\Omega}^p+c_3(\varepsilon_3)+g_f\|u_n\|_{p,\Omega}^p+\|d_f\|_{1,\Omega}&\text{if } \theta_2<p \text{ and }\theta_3=p,\\
				\varepsilon_3\|\nabla u_n\|_{p,\Omega}^p+c_3(\varepsilon_3)+\varepsilon_2\|u_n\|_{p,\Omega}^p+c_2(\varepsilon_2)+\|d_f\|_{1,\Omega}&\text{if }\theta_2<p \text{ and }\theta_3<p,
			\end{cases}
		\end{split}
	\end{align}
	with some $c_1(\varepsilon_1),c_2(\varepsilon_2),c_3(\varepsilon_3)>0$, where $\xi_n\colon \Gamma_2\to\R$ is such that
	\begin{align*}
		\xi_n(x)(-u_n(x))=j^\circ(x,u_n(x);-u_n(x))\quad \text{for a.\,a.\,}x\in \Gamma_2.
	\end{align*}
	From \eqref{eqnn16}, \eqref{eqnn16.0}, \eqref{eqnn17} and the inequality $e_f\delta(\theta_2)+g_f\lambda_1\delta(\theta_3)+c_j\lambda_2\delta(\theta_1)<1$ along with the continuity of the embedding $V\hookrightarrow W^{1,p}(\Omega)$ we get
	\begin{align*}
				0&\ge \|\nabla u_n\|_{p,\Omega}^p+\|\nabla u_n\|_{q,\Omega,\mu}^q+\|u_n\|_{p,\Omega}^p+\|u_n\|_{q,\Omega,\mu}^q-\int_\Omega \eta_n(x) u_n\,\mathrm{d}x\nonumber\\
&\quad-\int_{\Gamma_2}j^\circ(x,u_n;-u_n)\,\mathrm{d}\Gamma\to \infty
	\end{align*}
	as $n\to\infty$. This is a contradiction. Therefore, the set $\bigcup\limits_{n\in \mathbb N}\mathcal S_n$ is uniformly bounded in $V$ and so Step 1 is verified.

	Let $\{u_n\}_{n\in\N}\subset V$ be a sequence such that $u_n\in \mathcal S_n$ for each $n\in\mathbb N$. By virtue of Step 1,  we may suppose that along a relabeled subsequence
	\begin{align}\label{eqnn19}
		u_n\wto u\quad\text{as }n\to\infty
	\end{align}
	for some $u\in V$. This means that the set $w\text{-}\limsup\limits_{n\to\infty} \mathcal S_n$ is nonempty.

	Next, we shall show that $w\text{-}\limsup\limits_{n\to\infty} \mathcal S_n$ is a subset of $\mathcal S$. For any $u\in w\text{-}\limsup\limits_{n\to\infty} \mathcal S_n$ fixed, passing to a subsequence if necessary, we are able to find  a sequence $\{u_n\}_{n\in\N}\subset V$ with $u_n\in \mathcal S_n$ for all $n\in\mathbb N$ such that \eqref{eqnn19} is satisfied. Our goal is to prove that $u\in \mathcal S$.

	{\bf Step 2.} $u(x)\le \Phi(x)$ for a.\,a. $x\in \Omega$.

	For every $n\in\mathbb N$, we have
	\begin{align}\label{eqnn20}
		&\frac{1}{\rho_n}\int_\Omega \left(u_n-\Phi\right)^+v\,\mathrm{d}x
		\le \langle Au_n,-v\rangle +\int_\Omega \eta_n(x)v\,\mathrm{d}x+\int_{\Gamma_2}j^\circ(x,u_n;-v)\,\mathrm{d}\Gamma.
	\end{align}
	Taking H\"older's inequality and hypothesis H($f$)(iii) into account yields
	\begin{align}\label{eqnn21}
		\int_\Omega \eta_n(x)v\,\mathrm{d}x \le M_2\left(\|\nabla 	u_n\|_{p,\Omega}^p+\|u_n\|_{q_1,\Omega}^{q_1}+\|c_f\|_{q_1',\Omega}^{q_1'}\right)^{\frac{1}{q_1'}}\|v\|_{q_1,\Omega}
	\end{align}
	for some $M_2>0$. From hypothesis H($j$)(iii) we obtain
	\begin{align}\label{eqnn21.0}
		\begin{split}
			& \int_{\Gamma_2}j^\circ(x,u_n;-v)\,\mathrm{d}\Gamma\le \int_{\Gamma_2} (a_j|u_n|^{q_2-1}+b_j(x))v\,\mathrm{d}\Gamma\\
			&\le M_3\left(\|u_n\|_{q_2,\Gamma_2}+\|b_j\|_{q_2',\Gamma_2}\right)\|v\|_{q_2,\Gamma_2}
		\end{split}
	\end{align}
	for some $M_3>0$. Putting \eqref{eqnn21} and \eqref{eqnn21.0} into \eqref{eqnn20},  by applying the boundedness of $A$ (see Proposition \ref{prop1}), the convergence \eqref{eqnn19} and the continuity of the embeddings $V \hookrightarrow W^{1,p}(\Omega)\hookrightarrow L^{q_1}(\Omega)$ and $V\hookrightarrow L^{q_*}(\Gamma_2)$, we find a constant $M_4>0$, which is independent of $n$, such that
	\begin{align*}
		\frac{1}{\rho_n}\int_\Omega \left(u_n-\Phi\right)^+v\,\mathrm{d}x \le M_4\|v\|_{V}.
	\end{align*}
	Hence,
	\begin{align*}
		\int_\Omega \left(u_n-\Phi\right)^+v\,\mathrm{d}x\le \rho_nM_4\|v\|_{V}
	\end{align*}
	for all $v\in V$. Letting  $n\to \infty$ in the inequality above, using the convergence \eqref{eqnn19}, the compactness of the embedding $V\hookrightarrow L^{q_1}(\Omega)$ and Lebesgue's Dominated Convergence Theorem, it gives
	\begin{align*}
		\int_\Omega \left(u-\Phi\right)^+v(x)\,\mathrm{d}x
		&= \int_\Omega \lim\limits_{n\to\infty}\left(u_n-\Phi\right)^+v\,\mathrm{d}x\\
		&= \lim\limits_{n\to\infty}\int_\Omega \left(u_n-\Phi\right)^+v(x)\,\mathrm{d}x\\
		&\le \lim\limits_{n\to\infty}\rho_nM_4\|v\|_{V}\\
		&=0
	\end{align*}
	for all $v\in V$. Therefore, we have $\left(u(x)-\Phi(x)\right)^+=0$ for a.\,a.\,$x\in \Omega$, thus, $u(x)\le \Phi(x)$ for a.\,a.\,$x\in \Omega$.
	
	{\bf Step 3.} $u\in \mathcal S$.

	Note that
	\begin{align*}
		\langle Au_n,u_n-v\rangle
		&\le \frac{1}{\rho_n}\int_\Omega \left(u_n-\Phi\right)^+(v-u_n)\,\mathrm{d}x+\int_\Omega \eta_n(x)(u_n-v)\,\mathrm{d}x\\
		&\quad +\int_{\Gamma_2}j^\circ(x,u_n;v-u_n)\,\mathrm{d}\Gamma
	\end{align*}
	for all $v\in V$. From the monotonicity of $s\mapsto s^+$ we obtain
	\begin{align*}
		\langle Au_n,u_n-v\rangle
		& \le \frac{1}{\rho_n}\int_\Omega \left(v-\Phi\right)^+(v-u_n)\,\mathrm{d}x+\int_\Omega \eta_n(x)(u_n-v)\,\mathrm{d}x\\
		&\quad+\int_{\Gamma_2}j^\circ(x,u_n;v-u_n)\,\mathrm{d}\Gamma
	\end{align*}
	for all $v\in V$. By virtue of the definition of $K$ (see  \eqref{defK}), we have
	\begin{align}\label{eqnn22}
		\langle Au_n,u_n-v\rangle-\int_\Omega \eta_n(x)(u_n-v)\,\mathrm{d}x-\int_{\Gamma_2}j^\circ(x,u_n;v-u_n)\,\mathrm{d}\Gamma\le 0
	\end{align}
	for all $v\in K$.

	From Step 2 we know that $u\in K$. So, taking $v=u$ in \eqref{eqnn22} leads to
	\begin{align*}
		\langle Au_n,u_n-u\rangle\le \int_\Omega \eta_n(x)(u_n-u)\,\mathrm{d}x+\int_{\Gamma_2}j^\circ(x,u_n;u-u_n)\,\mathrm{d}\Gamma.
	\end{align*}
	Keeping in mind that the embeddings $V\hookrightarrow L^{q_1}(\Omega)$ and $V\hookrightarrow L^{q_2}(\Gamma_2)$ are both compact, we have
	\begin{align*}
		\lim_{n\to\infty} \int_\Omega \eta_n(x)(u_n-u)\,\mathrm{d}x=0\quad \text{and}\quad\limsup_{n\to\infty}\int_{\Gamma_2}j^\circ(x,u_n;u-u_n)\,\mathrm{d}\Gamma \le 0,
	\end{align*}
	where we have used the boundedness of $\{\eta_n\}_{n\in\N}\subset L^{q_1'}(\Omega)$, upper semicontinuity of $(s,t)\mapsto j^\circ(x,s;t)$ and Fatou's Lemma.  So, it holds
	\begin{align*}
		\limsup\limits_{n\to\infty}\langle Au_n,u_n-u\rangle\le 0.
	\end{align*}
	This combined with the convergence \eqref{eqnn19} and the  $(\Ss_+)$-property of $A$ (see Proposition \ref{prop1}) concludes that $u_n\to u$. Therefore, we have $w\text{-}\limsup\limits_{n\to \infty}\mathcal S_n\subset s\text{-}\limsup\limits_{n\to \infty}\mathcal S_n$. This together with  $s\text{-}\limsup\limits_{n\to\infty}{\mathcal S}_n\subset w\text{-}\limsup\limits_{n\to\infty}{\mathcal S}_n$ implies that $\emptyset\neq w\text{-}\limsup\limits_{n\to \infty}\mathcal S_n= s\text{-}\limsup\limits_{n\to \infty}\mathcal S_n$.

	Since $\{\eta_n\}_{n\in\N}$ is bounded in $L^{q_1'}(\Omega)$, as done before, we can also prove that $\eta_n\wto \eta$ in $L^{q_1'}(\Omega)$ as $n\to\infty$ for some $\eta\in L^{q_1'}(\Omega)$ with $\eta(x)\in f(x,u(x),\nabla u(x))$ for a.\,a.\,$x\in \Omega$. Consequently, we conclude that $\emptyset\neq  w\text{-}\limsup\limits_{n\to \infty}\mathcal S_n=s\text{-}\limsup\limits_{n\to \infty}\mathcal S_n\subset \mathcal S$.

	{\rm (iii)} For any fixed $u\in s\text{-}\limsup\limits_{n\to\infty}\mathcal S_n$, the nonemptiness, boundedness and closedness of  $\mathcal S_n$ guarantees that the set $ \mathcal T(\mathcal S_n,u)$ is well-defined. Let $\{\widetilde u_n\}_{n\in\N}$ be  any sequence such that
	\begin{align*}
		\widetilde u_n\in  \mathcal T(\mathcal S_n,u)\quad\text{ for each }n\in\mathbb N.
	\end{align*}
	It follows from Step~1 that the sequence $\{\widetilde u_n\}_{n\in\N}$ is bounded. So, without any loss of generality, we may assume that
	\begin{align}\label{eqns4.11}
		\widetilde u_n\wto \widetilde u \quad \text{in }V \text{ as }n\to\infty
	\end{align}
	for some $\widetilde u\in V$. Arguing as in the proof of Step~2, we obtain that $\widetilde u\in K$. Then, for each $n\in\mathbb N$, we have
	\begin{align*}
		\langle A\widetilde u_n,\widetilde u_n-v\rangle
		&\le \frac{1}{\rho_n}\int_\Omega \left(\widetilde u_n-\Phi\right)^+(v-\widetilde u_n)\,\mathrm{d}x+\int_\Omega \eta_n(x)(\widetilde u_n-v)\,\mathrm{d}x\\
		&\quad +\int_{\Gamma_2}j^\circ(x,u_n;v-u_n)\,\mathrm{d}\Gamma
	\end{align*}
	for all $v\in V$. Proceeding in the same way as in the proof of Step 3, we conclude that $\widetilde u $ is a solution to problem \eqref{eqn1} as well. Since $u\in s\text{-}\limsup\limits_{n\to\infty}\mathcal S_n$, passing to a subsequence if necessary, there exists a sequence $\{u_n\}_{n\in\N}$ such that $u_n\in \mathcal S_n$ and $u_n\to u$ in $V$ as $n\to\infty$.  The latter combined with \eqref{eqns4.11} deduces that
	\begin{align*}
		\|\widetilde u-u\|_V\le \liminf_{n\to\infty}\|\widetilde u_n-u\|_V\le \liminf_{n\to\infty}\|u_n-u\|_V=0,
	\end{align*}
	this means that $\widetilde u=u$. Consequently, the desired conclusion is proved.
\end{proof}

If meas($\Gamma_2$)$=0$, namely $\Gamma_1=\Gamma$, then Theorem~\ref{main_theorem} reduces the following corollary, which coincides with~\cite[Theorem 3.4]{ Zeng-Bai-Gasinski-Winkert-2021}.
\begin{corollary}
	Let hypotheses \eqref{conditions-p-q-mu}, \textnormal{H($f$)} and \textnormal{H($\Phi$)} be satisfied. If $\{\rho_n\}_{n\in\N}$ is a sequence with $\rho_n>0$ for each $n\in\mathbb N$ such that $\rho_n\to 0$ as $n\to\infty$, then the following statements hold true:
	\begin{enumerate}
		\item[{\rm (i)}]
			For each $n\in \mathbb N$, the set $\tilde {\mathcal S_n}$ of weak solutions of the following  problem  is nonempty, bounded and weakly closed in $W_0^{1,\mathcal H}(\Omega)$
			\begin{equation*}
				\begin{aligned}
				D_\mu(u)+\frac{1}{\rho_n}\left(u-\Phi\right)^+  &\in f(x,u,\nabla u)&& \text{\textnormal{in} } \Omega,\\
				u  &= 0 &&\text{\textnormal{on} } \Gamma.
				\end{aligned}
			\end{equation*}
		\item[{\rm (ii)}]
			It holds
			\begin{align*}
				\emptyset\neq w\text{-}\limsup\limits_{n\to\infty}\tilde{\mathcal S_n}=s\text{-}\limsup\limits_{n\to\infty}\tilde{\mathcal S_n}\subset \tilde{\mathcal S},
			\end{align*}
			where $\tilde {\mathcal S}$ is the solution set to problem \eqref{eqn1.0}.
		\item[{\rm (iii)}]
			For each $u\in s\text{-}\limsup\limits_{n\to\infty}\tilde{\mathcal S_n}$ and any sequence $\{\widetilde u_n\}_{n\in\N}$ with
			\begin{align*}
				\tilde u_n\in \mathcal T(\tilde{\mathcal S_n},u)\quad\text{for each }n\in\mathbb N,
			\end{align*}
			there exists a subsequence of $\{\tilde u_n\}_{n\in\N}$ converging strongly to $u$ in $W_0^{1,\mathcal H}(\Omega)$, where the set $ \mathcal T(\tilde{\mathcal S_n},u)$ is defined by
			\begin{align*}
				\mathcal T(\tilde{\mathcal S_n},u):=\left\{\tilde u \in\tilde {\mathcal S_n}\,\mid\,\|u-\tilde u\|_{W_0^{1,\mathcal H}(\Omega)}\le\|u-v\|_{W_0^{1,\mathcal H}(\Omega)} \text{ for all $ v\in \tilde{\mathcal S_n}$}\right\}.
			\end{align*}
	\end{enumerate}
\end{corollary}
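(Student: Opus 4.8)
The plan is to handle the three assertions in order, reducing (i) to Theorem~\ref{theorems3.3} and concentrating the real work on the Kuratowski convergence in (ii). For (i), I would note that \eqref{eqns4.1} has exactly the structure of \eqref{eqn1} with the convex subdifferential of the indicator $I_K$ replaced by the single-valued penalty operator $\tfrac{1}{\rho_n}B$. Since Lemma~\ref{lemmas4.1} furnishes boundedness, demicontinuity and monotonicity of $B$, the operator $u\mapsto A(u)-i^*\mathcal N_f(u)+\gamma^*\partial J(\gamma u)+\tfrac{1}{\rho_n}B(u)$ is again bounded and pseudomonotone, while the monotonicity of $B$ together with $B(0)=0$ can only help the coercivity estimate. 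Thus the same surjectivity result used for \eqref{eqns3.7} applies, and the boundedness and weak closedness of $\mathcal S_n$ follow verbatim from Parts~II and~III of the proof of Theorem~\ref{theorems3.3}, now carrying the extra monotone term $B$.

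For (ii) I would argue in three steps. \textbf{Step~1.} To see that $\bigcup_n\mathcal S_n$ is uniformly bounded, I test the weak formulation of \eqref{eqns4.1} with $v=-u_n$; since $\Phi\ge 0$ gives $\int_\Omega(u_n-\Phi)^+u_n\,\mathrm{d}x\ge 0$, the penalty term drops out with the favourable sign, and the remaining terms are controlled exactly by the estimates \eqref{eqns3.14}--\eqref{eqns3.16}. The structural inequality $e_f\delta(\theta_2)+g_f\lambda_1\delta(\theta_3)+c_j\lambda_2\delta(\theta_1)<1$ then rules out $\|u_n\|_V\to\infty$, and uniform boundedness makes $w\text{-}\limsup_{n\to\infty}\mathcal S_n$ nonempty. \textbf{Step~2.} Given $u_n\in\mathcal S_n$ with $u_n\weak u$, I bound $\tfrac{1}{\rho_n}\int_\Omega(u_n-\Phi)^+v\,\mathrm{d}x$ by $M\|v\|_V$ uniformly in $n$, using the growth bounds H($f$)(iii), H($j$)(iii) and the boundedness of $A$; multiplying by $\rho_n\to 0$ and passing to the limit through the compact embedding $V\hookrightarrow L^{q_1}(\Omega)$ and dominated convergence yields $\int_\Omega(u-\Phi)^+v\,\mathrm{d}x=0$ for all $v$, hence $u\le\Phi$, i.e.\ $u\in K$. \textbf{Step~3.} Writing $\eta_n\in\mathcal N_f(u_n)$ for the selections appearing in the weak formulation, the key device is the monotonicity of $s\mapsto s^+$: for any test function $v\in K$ one has $(v-\Phi)^+=0$, so replacing $(u_n-\Phi)^+$ by $(v-\Phi)^+$ eliminates the penalty term and leaves the constrained variational inequality on $K$. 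Choosing $v=u$, using that $\{\eta_n\}$ is bounded in $L^{q_1'}(\Omega)$ and the compactness of $V\hookrightarrow L^{q_1}(\Omega)$ and $V\hookrightarrow L^{q_2}(\Gamma_2)$, together with the upper semicontinuity of $j^\circ$ and Fatou's lemma, gives $\limsup_n\langle Au_n,u_n-u\rangle\le 0$; the $(\Ss_+)$-property of $A$ (Proposition~\ref{prop1}) then upgrades $u_n\weak u$ to $u_n\to u$ in $V$. This proves $w\text{-}\limsup_{n\to\infty}\mathcal S_n\subset s\text{-}\limsup_{n\to\infty}\mathcal S_n$; the reverse inclusion is automatic, so the two upper limits agree, and passing to the limit in the constrained inequality confirms that the limit lies in $\mathcal S$.

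For (iii), Step~1 again gives $\widetilde u_n\weak\widetilde u$ along a subsequence, and repeating Steps~2--3 shows $\widetilde u\in\mathcal S$. Because $u\in s\text{-}\limsup_{n\to\infty}\mathcal S_n$, there is $u_n\in\mathcal S_n$ with $u_n\to u$ strongly; the best-approximation defining property of $\mathcal T(\mathcal S_n,u)$ then yields $\|u-\widetilde u_n\|_V\le\|u-u_n\|_V\to 0$, which both forces $\widetilde u=u$ (by weak lower semicontinuity of the norm) and directly delivers the asserted strong convergence of a subsequence of $\{\widetilde u_n\}$ to $u$. The main obstacle I anticipate is Step~3 of part (ii): the elimination of the penalty term demands the right admissible test functions in $K$ and careful sign bookkeeping for the positive-part nonlinearity, and the subsequent passage from weak to strong convergence is only possible because the convection integral $\int_\Omega\eta_n(x)(u_n-u)\,\mathrm{d}x$ and the boundary term can first be neutralised through the compact embeddings, after which the $(\Ss_+)$-property applies; obtaining the uniform feasibility bound in Step~2 \emph{before} multiplying by $\rho_n$ is the other delicate point.
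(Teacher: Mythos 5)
Your argument is correct and is essentially the paper's own proof of Theorem~\ref{main_theorem}, of which this corollary is simply the special case $\Gamma_2=\emptyset$ (so $V=W_0^{1,\mathcal H}(\Omega)$ and all boundary terms involving $j^\circ$ vanish); the paper itself disposes of the corollary in one line by citing that theorem. All three of your steps --- the favourable sign of the penalty term under $v=-u_n$, the uniform bound on $\rho_n^{-1}\int_\Omega(u_n-\Phi)^+v\,\mathrm{d}x$ yielding $u\in K$, and the monotonicity of $s\mapsto s^+$ to eliminate the penalty for $v\in K$ before invoking the $(\Ss_+)$-property --- coincide with Steps 1--3 of that proof.
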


\section*{Acknowledgments}
	The authors wish to thank the two knowledgeable referees for their remarks in order to improve the paper.
	
	This project has received funding from the NNSF of China Grant Nos. 12001478, 12026255 and 12026256, and the European Union's Horizon 2020 Research and Innovation Programme under the Marie Sklodowska-Curie grant agreement No. 823731 CONMECH, National Science Center of Poland under Preludium Project No. 2017/25/N/ST1/00611,  the Startup Project of Doctor Scientific Research of Yulin Normal University No. G2020ZK07, and the Natural Science Foundation of Guangxi Grant Nos. 2020GXNSFBA297137 and 2018GXNSFDA138002.  The research of Vicen\c tiu D. R\u adulescu was supported by a grant of the Romanian Ministry of Research, Innovation and Digitization, CNCS/CCCDI--UEFISCDI, project number PCE 137/2021, within PNCDI III.

\end{document}